\providecommand{\U}[1]{\protect\rule{.1in}{.1in}}
\newtheorem{teor}{Theorem}
\newtheorem{cor}{Corollary}
\newtheorem{prop}{Proposition}
\theoremstyle{definition}
\newtheorem{exa}{Example}
\theoremstyle{definition}
\renewcommand{\subjclassname}{AMS \textup{2010} Mathematics Subject
Classification\ }
\email{grau@uniovi.es}
\begin{document}
\author{L. Bayón}
\address{Departamento de Matemáticas, Universidad de Oviedo\\
Avda. Calvo Sotelo s/n, 33007 Oviedo, Spain}
\email{bayon@uniovi.es}
\author{P. Fortuny}
\address{Departamento de Matemáticas, Universidad de Oviedo\\
Avda. Calvo Sotelo s/n, 33007 Oviedo, Spain}
\email{fortunypedro@uniovi.es}
\author{J. Grau}
\address{Departamento de Matemáticas, Universidad de Oviedo\\
Avda. Calvo Sotelo s/n, 33007 Oviedo, Spain}
\email{grau@uniovi.es}
\author{A. M. Oller-Marcén}
\address{Centro Universitario de la Defensa de Zaragoza - IUMA\\
Ctra. Huesca s/n, 50090 Zaragoza, Spain}
\email{oller@unizar.es}
\author{M. M. Ruiz}
\address{Departamento de Matemáticas, Universidad de Oviedo\\
Avda. Calvo Sotelo s/n, 33007 Oviedo, Spain}
\email{mruiz@uniovi.es}

\title{The multi-returning secretary problem}

\begin{abstract}
In this paper we consider the so-called Multi-returning secretary problem, a version of the Secretary problem in which each candidate has $m$
identical copies. The case $m=2$ has already been completely solved by several authors using different methods both the case $m>2$ had not been satisfactorily solved yet. Here, we provide and efficient algorithm to compute the optimal threshold and the probability of success for every $m$. Moreover, we give a method to determine their asymtoptic values based on the solution of a system of $m$ ODEs. 
\end{abstract}

\maketitle
\keywords{Keywords: Multi-returning problem, Secretary problem, Combinatorial Optimization, Dynamic programming}

\subjclassname{60G40, 62L15}

\section{Introduction}

The so-called \emph{Secretary problem} is possibly one of the most famous problem in
optimal stopping theory. This problem can be stated as follows: We want to select the best out $n$ rankable candidates. The candidates are inspected one by one at random order and we have to accept or reject the candidate immediately. At each step, we can rank the candidate among all the preceding ones, but we are unaware of the quality of yet unseen candidates. The goal is to determine the
optimal strategy that maximizes the probability of selecting the best candidate.

Dynkin \cite{48} and Lindley \cite{101} independently proved that the best strategy consists in rejecting roughly the first $n/e$ interviewed candidates and then selecting the first one that is better
than all the preceding ones. Following this strategy, the probability of
selecting the best candidate is at least $1/e$, this being its approximate
value for large values of $n$. This well-known solution was later refined by
Gilbert and Mosteller \cite{gil}, showing that $\left\lfloor (n-\frac{1}%
{2})e^{-1}+\frac{1}{2}\right\rfloor $ is a better approximation than $\lfloor
n/e\rfloor$, although the difference is never greater than 1. The Secretary problem has been addressed by many authors in different fields
such as applied probability, statistics and decision theory. Extensive
bibliographies on the topic can be found in \cite{FER}, \cite{FER2} and
\cite{2009} for instance.

This classical problem has several modifications that can be addressed and solved in a rather straightforward
manner using the so-called \emph{odds-algorithm} devised by Bruss \cite{ods}. We can mention, for example:
\begin{itemize}
\item The \emph{Best or Worst problem}, in which the goal is to select either the best or the worst candidate \cite{nuestropos}.
\item The \emph{Postdoc problem}, in which the goal is to select the second best candidate \cite{posdoc,nuestropos} or, even more generally, the $k$-th best candidate \cite{aesima}.
\item The \emph{Win, Lose or Draw marriage problem}, in which payoff is $1$ if
the player select the best, $-1$ if he select someone who is not the best, and 0
if no object is selected and the goal is to maximize the payoff \cite{saka}.
\item The \emph{Secretary problem with uncertain employment}, in which a candidate may refuse to be accepted with a given probability \cite{refusal}.
\item The \emph{One of the best two problem}, in which the goal is to select either the best or the second best candidate receiving different payoffs in each case \cite{gil, gus}.
\end{itemize}

Another interesting variant of the classical problem, the so-called \emph{Returning Secretary problem} was introduced by Garrod in 2012 \cite{tesis, artigarrod}. In this variant, every candidate has an identical copy and the goal is still to select the best candidate. In 2015, Shai Vardi \cite{gemelasvardi, arx} independently addressed the same problem. Both Garrod and Vardi approach the problem from the perspective of partially ordered sets. Very recently, Grau \cite{yo} introducing a new methodology based on the
solution of differential equations.

The previous variant can be further generalized to consider the \emph{Multi-returning Secretary problem}, in which every candidate has $m$ identical copies or, equivalently, in which every candidate is inspected $m$ times. Garrod \cite{tesis} shows that the optimal strategy in the Multi-returning Secretary problem is a threshold strategy (just like in the classical problem). He also provides explicit formulas for the optimal threshold $\mathbf{k}_n^m$ \cite[Theorem 2.2]{tesis} and for the probability of success $\mathbf{P}_n^m$ \cite[Theorem 2.16]{tesis}. However, his formulas are very inefficient from the computational point of view. In fact, for a fixed $m$, the formula for $\mathbf{k}_n^m$ requires a number of operations of order $O(n^2)$ while the formula for $\mathbf{P}_n^m$ requires a number of operations of order $O(n^{m-1})$. Regarding the asymptotic behavior, Garrod is able to prove for every fixed $n$ that $\lim_m \mathbf{k}_n^m=\lceil n/2\rceil$ and that $\lim_m \mathbf{P}_n^m=1$. However, the limitations of Garrod's approach for $m>2$ are clearly shown in the following table \cite[p. 51]{tesis}.

\begin{table}[h]\label{tablagarrod}
\caption{Table from \cite[p.51]{tesis}. Numbers in boldface are wrong in the original (correct values are in parentheses).}
\begin{tabular}
[c]{|l|l|l|l|l|l|}\hline
$m$ & $\mathbf{k}_{100}^m$ & $\mathbf{k}_{1000}^{m}$ &  
$\lim_{n}\left(  \frac{\mathbf{k}_{n}^{m}}{n}\right)  $ & $\mathbf{P}_{100}^{m}$ & $\lim_{n}\mathbf{P}_{n}^{m}$\\\hline
1 & 38 & 369 & 0.3679 & $\mathbf{0.3708}$ (0.371042)& 0.3678794 \\\hline
2 & 48 &  471 & 0.4709  & 0.76970661 &  0.7679742\\\hline
3 & 50 & 493  & ?  & $\mathbf{0.9354}$ (0.93518)& ? \\\hline
4 & 50 & 499  & ?   & ?  &  ? \\\hline
5 & 50 & 500  & ?  & ?  &   ?  \\\hline
6 & 50 & 500   & ?   & ?    & ?  \\\hline
7 & 50 & 500     & ?    & ?        &  ? \\\hline
8 & 50 &500     &?    & ?          &   ?\\\hline
9 & 50  &500    & ?  & ?  & ?  \\\hline
10 & 50  &500  & ?  &?   &  ? \\\hline
\end{tabular}
\end{table}

Motivated by these limitations, Garrod \cite[p. 52]{tesis} presents a series of open problems:
\begin{enumerate}
\item Improve the formula for $\mathbf{P}_n^m$.
\item For fixed ${m}$, prove that $\lim_n \left(\frac{\mathbf{k}_n^m}{n}\right)$ exists.
\item For fixed ${m}$, prove that $\lim_n \mathbf{P}_n^m$ exists.
\item If $\lim_n\left(\frac{\mathbf{k}_n^m}{n}\right)$ exists, find its value as the root of an equation or as a function of
$\lim_n\left(\frac{\mathbf{k}_n^{m-1}}{n}\right)$.
\item If $\lim_n\mathbf{P}_n^{m}$ exists, find its value as the root of an equation or as a function of
$\lim_n\mathbf{P}_n^{m-1}$.
\end{enumerate}

In the present paper we address the previous open problems. In particular, we give efficient algorithms that compute $\mathbf{k}_n^m$ and $\mathbf{P}_n^m$ as well as a method, based on the techniques introduced in \cite{yo} to compute their asymptotic values.

The paper is organized as follows. In Section 2 we present some technical results. Section 3 is devoted to revisit the ${m}$-returning secretary problem using a dynamic programming approach, providing a method to compute $\mathbf{P}_n^m$. In Sections 4 and 5, using the ideas from \cite{yo}, we give methods to compute the asympotic values of $\mathbf{k}_n^m$ and $\mathbf{P}_n^m$. Finally, Section 6 concludes the paper relating our results to Garrod's open problems.

\section{Some technical results}

In this section we present some technical results that will be used extensively in forthcoming sections. The first proposition was already introduced in \cite[Proposition 1]{yo} and, in some sense, it extends \cite[Proposition 1]{nuestropos}.

\begin{prop}
\label{conv} Let $\{F_{n}\}_{n\in\mathbb{N}}$ be a sequence of functions
with $F_{n}:\{0,\dots,n\}\to\mathbb{R}$ and let
$\mathcal{M}(n)\in\{0,\dots,n\}$ be a value for which the function $F_{n}$ reaches its
maximum. Assume that the sequence of functions $\{f_{n}\}_{n\in\mathbb{N}}$
defined by $f_{n}(x):=F_{n}(\lfloor nx\rfloor)$ for every $x\in[0,1]$ converges uniformly on $[0,1]$
to a continuous function $f$ and that $\theta$ is the only global maximum of
$f$ in $[0,1]$. Then,
\begin{itemize}
\item[i)] $\displaystyle\lim_{n} \mathcal{M}(n)/n =\theta$.
\item[ii)] $\displaystyle\lim_{n} F_{n}(\mathcal{M}(n))= f(\theta)$.
\end{itemize}
\end{prop}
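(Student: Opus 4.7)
The plan is to establish (ii) first via the uniform convergence, and then deduce (i) by a standard subsequence argument that exploits the uniqueness of $\theta$ as a global maximizer.

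First, I would observe that each $f_n$ is piecewise constant on $[0,1]$: on the half-open interval $[k/n,(k+1)/n)$ with $0\le k<n$ it equals $F_n(k)$, and $f_n(1)=F_n(n)$. Consequently
\[
\sup_{x\in[0,1]} f_n(x) \;=\; \max_{0\le k\le n} F_n(k) \;=\; F_n(\mathcal{M}(n)).
\]
Uniform convergence forces $\sup f_n\to \sup f$, and since $f$ is continuous on the compact set $[0,1]$ with unique global maximizer $\theta$, $\sup f = f(\theta)$. This immediately yields (ii).

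For (i), set $x_n:=\mathcal{M}(n)/n\in[0,1]$. I would show that every subsequence of $(x_n)$ admits a further subsequence converging to $\theta$, which implies the full sequence converges to $\theta$. Given any subsequence, compactness of $[0,1]$ lets me extract $(x_{n_j})$ with $x_{n_j}\to x^{*}\in[0,1]$. Because $\mathcal{M}(n_j)$ is already an integer, $\lfloor n_j x_{n_j}\rfloor = \mathcal{M}(n_j)$, so
\[
f_{n_j}(x_{n_j}) \;=\; F_{n_j}(\mathcal{M}(n_j)) \;\longrightarrow\; f(\theta)
\]
by part (ii). On the other hand, the estimate
\[
|f_{n_j}(x_{n_j})-f(x^{*})| \;\le\; \|f_{n_j}-f\|_\infty + |f(x_{n_j})-f(x^{*})|,
\]
together with uniform convergence $f_{n_j}\to f$ and continuity of $f$ at $x^{*}$, gives $f_{n_j}(x_{n_j})\to f(x^{*})$. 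Combining the two limits forces $f(x^{*})=f(\theta)$, and uniqueness of the global maximum yields $x^{*}=\theta$, as required.

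I do not anticipate a genuine obstacle here: this is a routine argmax-convergence statement, and the only subtle point is that the hypothesis of \emph{uniqueness} of $\theta$ is essential. Without it the same argument would only show that $\mathcal{M}(n)/n$ accumulates in $\arg\max f$, and part (i) would fail in general.
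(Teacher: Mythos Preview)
Your argument is correct; the paper itself does not give a proof here but simply cites \cite[Proposition~1]{nuestropos}, where exactly this compactness/subsequence argument exploiting the uniqueness of the maximizer is carried out. Your write-up is essentially that proof.
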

\begin{proof}
It is identical to the proof of \cite[Proposition 1]{nuestropos}.
\end{proof}

The following result, which is rather similar to the previous one, will also turn out to be useful in the sequel.

\begin{prop}
\label{conv2} Let $\{F_{n}\}_{n\in\mathbb{N}}$ be a sequence of functions
with $F_{n}:\{0,\dots,n\}\to\mathbb{R}$ and let
$\mathcal{N}(n)\in\{0,\dots,n-1\}$ be such that
\begin{align*}
\frac{\mathcal{N}(n)}{n}&< F_{n}\left(\mathcal{N}(n) \right),\\
\frac{\mathcal{N}(n)+1}{n}&\geq F_{n}\left(\mathcal{N}(n)+1 \right).
\end{align*}
Assume that the sequence of functions $\{f_{n}\}_{n\in\mathbb{N}}$
defined by $f_{n}(x):=F_{n}(\lfloor nx\rfloor)$ for every $x\in[0,1]$ converges uniformly on $[0,1]$ to a continuous function $f$ and that $\theta$ is the only solution of $x=f(x)$. Then,
$\displaystyle\lim_{n}\mathcal{N}(n)/n=\theta$.
\end{prop}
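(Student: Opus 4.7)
The plan is to translate the two defining inequalities for $\mathcal{N}(n)$ into statements about the rescaled function $f_n$ at the normalized point $x_n := \mathcal{N}(n)/n$, and then conclude by a compactness/uniqueness argument exactly in the spirit of the proof of Proposition \ref{conv}.

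First I would observe that $x_n \in [0,1)$, that $\lfloor n x_n \rfloor = \mathcal{N}(n)$, and that $\lfloor n(x_n + 1/n)\rfloor = \mathcal{N}(n)+1$, so the two hypotheses rewrite as
\[
f_n(x_n) > x_n \quad \text{and} \quad f_n(x_n + 1/n) \leq x_n + 1/n.
\]
Since $\{x_n\} \subset [0,1]$ is bounded, to show $x_n \to \theta$ it is enough to prove that every convergent subsequence has limit $\theta$. Fix such a subsequence $x_{n_k} \to x^*$. For any sequence $y_k \to x^*$ in $[0,1]$, the triangle inequality
\[
\bigl|f_{n_k}(y_k) - f(x^*)\bigr| \leq \|f_{n_k}-f\|_{\infty} + \bigl|f(y_k)-f(x^*)\bigr|
\]
together with uniform convergence $f_{n_k}\to f$ and continuity of $f$ at $x^*$ gives $f_{n_k}(y_k) \to f(x^*)$. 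Applying this once with $y_k = x_{n_k}$ and once with $y_k = x_{n_k} + 1/n_k$, and passing to the limit in the two inequalities above, we obtain $f(x^*) \geq x^*$ and $f(x^*) \leq x^*$. Hence $f(x^*) = x^*$, and uniqueness of the fixed point forces $x^* = \theta$.

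The argument is essentially identical in spirit to that of Proposition \ref{conv}; the only real difference is that $\mathcal{N}(n)$ is characterized by a two-sided crossing condition rather than by maximization, so one must use \emph{both} defining inequalities to pin down the limit. The main (minor) technical point is the continuity-plus-uniform-convergence step that passes from $f_{n_k}(y_k)$ with $y_k \to x^*$ to $f(x^*)$; this is where the assumed continuity of $f$ (not just of each $f_n$) is used. No serious obstacle is anticipated.
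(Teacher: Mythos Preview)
Your proof is correct and follows essentially the same approach as the paper's: take a convergent subsequence of $x_n=\mathcal{N}(n)/n$, use uniform convergence together with continuity of $f$ to pass to the limit in both defining inequalities, obtain $f(x^*)=x^*$, invoke uniqueness of the fixed point, and conclude by compactness. Your write-up is in fact slightly more explicit than the paper's about the triangle-inequality step justifying $f_{n_k}(y_k)\to f(x^*)$.
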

\begin{proof}
Let us consider the sequence $\{\mathcal{N}(n)/n\}\subset[0,1]$ and
assume that $\{\mathcal{N}(s_{n})/s_{n}\}$ is a subsequence that converges to certain value
$\alpha\in[0,1]$. Then,
\begin{align*}
&\alpha=\lim_n\frac{\mathcal{N}(s_n)}{s_n}\leq \lim_n F_{s_n}(\mathcal{N}(s_n))=\lim_n F_{s_n}\left(\frac{\mathcal{N}(s_n)}{s_n}s_n\right)=\lim_n f_{s_n}\left(\frac{\mathcal{N}(s_n)}{s_n}\right)=f(\alpha),\\
&f(\alpha)=\lim_n f_{s_n}\left(\frac{\mathcal{N}(s_n)+1}{s_n}\right)=\lim_n F_{s_n}(\mathcal{N}(s_n)+1)\leq \lim_n \frac{\mathcal{N}(s_n)+1}{s_n}=\alpha.
\end{align*}
Consequently, $\alpha=f(\alpha)$ and since $\theta$ is the only solution of $x=f(x)$ it follows that
$\theta=\alpha$.

Thus, we have proved that every convergent subsequence of $\{\mathcal{N}(n)/n\}$ converges to the same limit $\theta$. Since $\{\mathcal{N}(n)/n\}$ is
defined on a compact set this implies that $\{\mathcal{N}(n)/n\}$ itself must
also converge to $\theta$.
\end{proof}

In some cases, the functions $F_n:\{0,\dots,n\}\to\mathbb{R}$ can be naturally extended to continuous functions $\widetilde{F_n}:[0,n]\to\mathbb{R}$ so that $F_n$ can be seen as the restriction of $\widetilde{F_n}$ to $[0,n]\cap\mathbb{Z}$. In this situation, $f_n(x)=F_{n}(\lfloor nx\rfloor)=\widetilde{F_n}(nx)$, and the uniform convergence of the sequence $\{f_n\}$ to a continuous function $f$ is usually easy to establish as it was the case in \cite{nuestropos} or in \cite{nuestropos2}. However, in a general situation the functions $\widetilde{F_n}$ might not be easy to find and the uniform convergence of the sequence $\{f_n\}$ to a continuous function is difficult to establish. 

In this work, we will follow an approach similar to that in \cite{yo}. Namely, we will assume the uniform convergence of the sequence $\{f_n\}$ to a continuous function. Under this assumption the following results show that the limit function $f$ can be easily found provided the functions $F_{n}$ are recursively defined. 

\begin{prop}
\label{inicial} 
Let $\{F_{n}\}_{n\in\mathbb{N}}$, $\{G_{n}\}_{n\in\mathbb{N}}$ and $\{H_{n}\}_{n\in\mathbb{N}}$ be sequences of functions with $F_n,G_n,H_n:\{0,\dots,n\}\in\mathbb{R}$ such that they satisfy
\begin{align*}
F_{n}(k)&=G_{n}(k)+H_{n}(k)F_{n}(k-1),\\ F_{n}(0)&=\mu.
\end{align*}
Moreover, for every $x\in[0,1]$, let us define $f_{n}(x):=F_{n}(\lfloor{nx}\rfloor)$, $h_{n}(x):=n(1-H_{n}(\lfloor
{nx}\rfloor))$ and $g_{n}(x):=nG_{n}(\lfloor{nx}\rfloor)$. If the following conditions hold:
\begin{itemize}
\item[i)] Both sequences $\{h_{n}\}$ and $\{g_{n}\}$ converge on $(0,1)$ and uniformly on $[\varepsilon,\varepsilon^{\prime}]$ for every $0<\varepsilon<\varepsilon^{\prime}<1$ to
continuous functions $h(x)$ and $g(x)$, respectively.
\item[ii)] The sequence $\{f_{n}\}$ converges uniformly on $[0,1]$ to a continuous function $f$. 
\end{itemize}
Then, $f(0)=\mu$ and $f$ satisfies the equation $f'=-fh+g$ in $(0,1)$.
\end{prop}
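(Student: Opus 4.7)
The plan is to check the initial condition directly and then convert the discrete recurrence into an integral equation in the limit, from which the ODE will follow by the fundamental theorem of calculus on each compact subinterval of $(0,1)$. The initial condition $f(0)=\mu$ is immediate, since $f_n(0)=F_n(0)=\mu$ for every $n$ and uniform convergence $f_n\to f$ on $[0,1]$ forces $f(0)=\mu$.

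For the ODE, the first step is to rewrite the recurrence, using that $F_n(j)=f_n(j/n)$, as
\[
n\bigl(F_n(k)-F_n(k-1)\bigr)=nG_n(k)-n\bigl(1-H_n(k)\bigr)F_n(k-1)=g_n(k/n)-h_n(k/n)\,f_n\bigl((k-1)/n\bigr).
\]
Fixing $0<\varepsilon<x<1$ and setting $k_n=\lfloor nx\rfloor$, $j_n=\lfloor n\varepsilon\rfloor$, I would sum from $j=j_n+1$ to $k_n$ to telescope the left-hand side and obtain, after dividing by $n$,
\[
f_n(x)-f_n(j_n/n)=\frac{1}{n}\sum_{j=j_n+1}^{k_n}g_n(j/n)-\frac{1}{n}\sum_{j=j_n+1}^{k_n}h_n(j/n)\,f_n\bigl((j-1)/n\bigr).
\]

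Next I would pass to the limit $n\to\infty$. The left-hand side tends to $f(x)-f(\varepsilon)$ by uniform convergence of $\{f_n\}$ together with continuity of $f$. The two sums on the right are Riemann sums over $[\varepsilon,x]$, a compact subinterval of $(0,1)$; since $g_n\to g$ and $h_n\to h$ uniformly there, and $f_n\to f$ uniformly on $[0,1]$, a standard Riemann-sum estimate yields
\[
f(x)-f(\varepsilon)=\int_\varepsilon^x\bigl(g(t)-h(t)f(t)\bigr)\,dt.
\]
The integrand being continuous on $[\varepsilon,x]$, the fundamental theorem of calculus makes $f$ differentiable there with $f'(x)=g(x)-h(x)f(x)=-f(x)h(x)+g(x)$, and since $\varepsilon$ is arbitrary this proves the ODE throughout $(0,1)$.

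The step requiring the most care is the Riemann-sum passage to the limit. One must split, for instance, $\bigl|\tfrac{1}{n}\sum_{j=j_n+1}^{k_n}g_n(j/n)-\int_\varepsilon^x g\bigr|$ into a uniform-convergence error $\sup_{[\varepsilon,x]}|g_n-g|$ times $(k_n-j_n)/n\to x-\varepsilon$, plus a standard Riemann-sum error for the continuous function $g$, and do the analogous splitting for the term involving $h_n f_n$. The restriction to a compact subinterval $[\varepsilon,x]\subset(0,1)$ is essential, because the hypotheses allow $g_n$ and $h_n$ to misbehave at the endpoints; this is precisely why the conclusion asserts the ODE only on the open interval.
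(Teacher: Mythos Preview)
Your argument is correct. The paper itself does not give a proof of this proposition at all; it simply refers the reader to \cite[Theorem~1]{yo}. Your self-contained argument---telescope the recurrence, pass to the limit to obtain an integral equation on each compact $[\varepsilon,x]\subset(0,1)$, then differentiate---is the natural way to prove such a statement and is almost certainly what the cited reference does. The only points one might expand slightly are (a) that the nodes $j/n$ with $j_n+1\le j\le k_n$ indeed lie in $(\varepsilon,x]$, so the uniform convergence hypothesis on compact subintervals of $(0,1)$ applies, and (b) that in the $h_nf_n$ sum the argument $(j-1)/n$ must be shifted to $j/n$ before invoking the Riemann sum, which is harmless by uniform continuity of $f$; you mention both issues in your last paragraph, so the proof is complete.
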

\begin{proof}
See \cite[Theorem 1]{yo}.
\end{proof}

\begin{prop}
\label{final} 
Let $\{F_{n}\}_{n\in\mathbb{N}}$, $\{G_{n}\}_{n\in\mathbb{N}}$ and $\{H_{n}\}_{n\in\mathbb{N}}$ be sequences of functions with $F_n,G_n,H_n:\{0,\dots,n\}\in\mathbb{R}$ such that they satisfy
\begin{align*}
F_{n}(k)&=G_{n}(k)+H_{n}(k)F_{n}(k+1),\\ F_{n}(n)&=\mu.
\end{align*}
Moreover, for every $x\in[0,1]$, let us define $f_{n}(x):=F_{n}(\lfloor{nx}\rfloor)$, $h_{n}(x):=n(1-H_{n}(\lfloor
{nx}\rfloor))$ and $g_{n}(x):=nG_{n}(\lfloor{nx}\rfloor)$. If the following conditions hold:
\begin{itemize}
\item[i)] Both sequences $\{h_{n}\}$ and $\{g_{n}\}$ converge on $(0,1)$ and uniformly on $[\varepsilon,\varepsilon^{\prime}]$ for every $0<\varepsilon<\varepsilon^{\prime}<1$ to
continuous functions $h(x)$ and $g(x)$, respectively.
\item[ii)] The sequence $\{f_{n}\}$ converges uniformly on $[0,1]$ to a continuous function $f$. 
\end{itemize}
Then, $f(1)=\mu$ and $f$ satisfies the equation $f'=fh-g$ in $(0,1)$.
\end{prop}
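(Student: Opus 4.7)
The plan is to reduce Proposition \ref{final} to the already-proven Proposition \ref{inicial} via the time-reversal $k \mapsto n-k$. Set
\[
\widehat{F}_{n}(k) := F_{n}(n-k), \qquad \widehat{G}_{n}(k) := G_{n}(n-k), \qquad \widehat{H}_{n}(k) := H_{n}(n-k).
\]
Substituting into the backward recurrence gives
\[
\widehat{F}_{n}(k) = F_{n}(n-k) = G_{n}(n-k) + H_{n}(n-k) F_{n}(n-k+1) = \widehat{G}_{n}(k) + \widehat{H}_{n}(k)\,\widehat{F}_{n}(k-1),
\]
together with the initial condition $\widehat{F}_{n}(0) = F_{n}(n) = \mu$. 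This is precisely the recursive structure required by Proposition \ref{inicial}.

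Next I would verify that the auxiliary rescaled functions satisfy the hypotheses of Proposition \ref{inicial}. Writing $\widehat{f}_{n}(x) := \widehat{F}_{n}(\lfloor nx\rfloor) = F_{n}(n-\lfloor nx\rfloor)$, and similarly $\widehat{h}_{n}(x) := n(1-\widehat{H}_{n}(\lfloor nx\rfloor))$ and $\widehat{g}_{n}(x) := n\widehat{G}_{n}(\lfloor nx\rfloor)$, the identity $n-\lfloor nx\rfloor \in \{\lfloor n(1-x)\rfloor, \lfloor n(1-x)\rfloor+1\}$ shows that $\frac{n-\lfloor nx\rfloor}{n} \to 1-x$ uniformly on $[0,1]$. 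Since $f_{n}\to f$ uniformly on $[0,1]$ with $f$ continuous, a standard argument (uniform continuity of $f$ combined with uniform convergence of $f_{n}$) yields $\widehat{f}_{n}(x) \to f(1-x)$ uniformly on $[0,1]$. The analogous argument, applied on each compact $[\varepsilon,\varepsilon']\subset(0,1)$ using the hypothesis on $h_{n}$ and $g_{n}$, gives $\widehat{h}_{n}(x)\to h(1-x)$ and $\widehat{g}_{n}(x)\to g(1-x)$, uniformly on $[\varepsilon,\varepsilon']$ and pointwise on $(0,1)$.

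Now Proposition \ref{inicial} applies to the hatted sequences and yields $\widehat{f}(0)=\mu$, which becomes $f(1)=\mu$, together with the ODE
\[
\widehat{f}'(x) = -\widehat{f}(x)\widehat{h}(x) + \widehat{g}(x) \qquad \text{on } (0,1).
\]
Substituting $\widehat{f}(x)=f(1-x)$ (so $\widehat{f}'(x)=-f'(1-x)$), $\widehat{h}(x)=h(1-x)$ and $\widehat{g}(x)=g(1-x)$, and then renaming $y=1-x$, one obtains $f'(y) = f(y)h(y)-g(y)$ on $(0,1)$, which is the desired equation.

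The only subtle point is the reflection step: the naive identity $\lfloor n(1-x)\rfloor = n-\lfloor nx\rfloor$ fails by at most one unit whenever $nx\notin\mathbb{Z}$. I expect this to be the main (mild) obstacle, but it is harmless because the uniform convergence of $\{f_{n}\}$ to a continuous $f$ on $[0,1]$ absorbs shifts of $O(1/n)$ in the argument; the same remark applies to $\{h_{n}\}$ and $\{g_{n}\}$ on compact subsets of $(0,1)$. Once this bookkeeping is done, the reduction to Proposition \ref{inicial} is clean and the proof is essentially a corollary.
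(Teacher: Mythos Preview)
Your reduction by time-reversal is correct. The recurrence for $\widehat{F}_n$ is exactly that of Proposition~\ref{inicial}, and the off-by-one between $n-\lfloor nx\rfloor$ and $\lfloor n(1-x)\rfloor$ is indeed harmless: writing $t_n(x)=(n-\lfloor nx\rfloor)/n$, one has $|t_n(x)-(1-x)|\le 1/n$, and the decomposition $|\widehat{f}_n(x)-f(1-x)|\le \|f_n-f\|_\infty+|f(t_n(x))-f(1-x)|$ together with uniform continuity of $f$ on $[0,1]$ gives the uniform limit you claim. The same estimate, restricted to $x\in[\varepsilon,\varepsilon']$ (so that $t_n(x)$ stays in a fixed compact of $(0,1)$ for large $n$), handles $\widehat{h}_n\to h(1-\cdot)$ and $\widehat{g}_n\to g(1-\cdot)$. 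The chain-rule substitution at the end is correct and yields $f'=fh-g$.

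As for comparison with the paper: the paper does not give an argument of its own here but simply refers to \cite[Theorem~2]{yo}, where the statement is proved directly (in the same spirit as Proposition~\ref{inicial}). Your approach is different in that it is a genuine \emph{reduction} to Proposition~\ref{inicial} rather than an independent proof; this buys you a fully self-contained argument within the present paper at the cost of the floor-reflection bookkeeping, which you have correctly identified and controlled.
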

\begin{proof}
See \cite[Theorem 2]{yo}.
\end{proof}

\begin{exa}
In the case of the classic Secretary problem, the probability
of success using the threshold $k$ is given by a function $F_{n}(k)$ which satisfies the
following recurrence relation:
\begin{align*}
F_{n}(k)&=\frac{1}{n}+\frac{k}{k+1} F_{n}(k+1),\\
F_{n}(n)&=0.
\end{align*}
If we consider $G_n(k)=\frac{1}{n}$ and $H_n(k)=\frac{k}{k+1}$ we get that $g_n(x)=1$ and $h_n(x)=n\frac{\lfloor nx+1\rfloor-\lfloor nx\rfloor}{\lfloor nx+1\rfloor}$ so it is easy to check that condition i) in Proposition \ref{final} holds with $g(x)=1$ and $h(x)=\frac{1}{x}$. Consequently, if we assume condition ii); i.e., if we assume that the sequence $\{f_{n}\}$ converges uniformly on $[0,1]$ to a continuous function $f$ then this function $f$ must satisfy the ODE $f'=\frac{f}{x}-1$ and the condition $f(1)=0$.

This leads to the well-known function $f(x)=-x\log(x)$, whose maximization in $[0,1]$ together with Proposition \ref{conv}
provide the asymptotic value of the optimal threshold $n/e$ as well as the asymptotic probability of success $e^{-1}$.
\end{exa}

\section{A dynamic programming approach to the $m$-returning Secretary problem}

Let us assume that there are $n$ candidates that arrive sequentially and that there are exactly $m$ identical copies of each candidate. The order in which they are inspected is random (of course, there are $(mn)!$ possibilities). At any given step it is only possible to know who is the best candidate so far and how many copies of this candidate have been inspected. Once a candidate is accepted, the process ends and, as usual, to succeed means to select the best candidate. We seek to maximize the probability of success. 

A candidate which is better than all the preceding ones will be called a \emph{maximal candidate}. Note that it is always preferable to reject a maximal candidate on its first $m-1$ appearances since we will always be able to accept it on its $m$-th appearance and, until then, other better candidates will be called a \emph{nice candidate}. On the other hand, if we inspect a non-maximal candidate, it
is irrelevant if any other of its copies has already been inspected. In fact, once a non-maximal candidate has been inspected, all of its copies can be considered as inspected.

The previous considerations imply that, at any given step, the relevant information is just the number of different inspected objects and the number of appearances of the maximal object so far. In particular, the decision nodes of this dynamic program are the appearances of nice candidates and there are only two possible actions: accept the candidate and stop or reject it and continue. As usual, an strategy is optimal if and only if, at any decision node, we choose the action with the greatest probability of success. 

In what follows we will consider the events
\begin{align*}
X_{m,n}^{k,i}=&\textrm{``succeed following the optimal strategy after having rejected $k$ different}\\ &\textrm{candidates among which the maximal candidate has appeared $i$ times''}
\end{align*}
and we will denote $\Psi_{m,n}^{i}(k)=p\left(X_{m,n}^{k,i}\right)$. 

Recall that $\mathbf{P}^m_n$ denotes the probability of success under the optimal strategy. Hence, with our notation, we have that $\mathbf{P}^m_n=p(X_{m,n}^{1,1})=\Psi_{m,n}^{1}(1)$ (note that due to the conditions of the problem we will always reject the first candidate if $mn>1$). The following results will be devoted to provide recursive relations for the function $\Psi_{m,n}^{i}$ that will ultimately allow us to effectively compute $\mathbf{P}^m_n$. 

\begin{prop}\label{progm}
$\Psi_{m,n}^{{m}}(n)=0$ and for every $1\leq k<n$, we have that
$$\Psi_{m,n}^{{m}}(k)=\frac{k}{k+1}\Psi_{m,n}^{{m}}(k+1)+\frac{1}{k+1}\Psi_{m,n}^{1}(k+1).$$
\end{prop}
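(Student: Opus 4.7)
The plan is to treat the boundary case separately and then establish the recurrence by a one-step conditioning argument, with the probabilistic identities $\frac{k}{k+1}$ and $\frac{1}{k+1}$ being the only step that requires nontrivial justification.

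For the boundary value, at state $(n,m)$ all $n$ distinct candidates have been inspected, so the overall best coincides with the current maximum; since all $m$ of its copies have already been rejected, no subsequent action can succeed and $\Psi_{m,n}^{m}(n)=0$.

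For the recurrence with $1\le k<n$, I would observe that at state $(k,m)$ the current maximum is exhausted (none of its copies remain) and further inspections of previously-seen non-maximal candidates leave the state unchanged and cannot profitably be accepted. Hence the next state transition must occur at the appearance of the $(k+1)$-st distinct candidate. I split on whether this new candidate is the best among the first $k+1$ distinct: if not, we reject and the state advances to $(k+1,m)$, contributing $\Psi_{m,n}^{m}(k+1)$; if so, we face the first appearance of a new maximum, which by the observation preceding the statement is optimally rejected, moving the state to $(k+1,1)$ and contributing $\Psi_{m,n}^{1}(k+1)$. The law of total probability then yields the claimed recurrence provided the two alternatives carry probabilities $\frac{k}{k+1}$ and $\frac{1}{k+1}$.

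The only step requiring genuine justification is this probabilistic identity, for which I would decouple the underlying randomness. Under a uniform random permutation of the $mn$ copies, the first-appearance order of the $n$ distinct candidates is itself a uniformly random permutation (by exchangeability of the $n$ groups of $m$ identical copies), while the quality ranks constitute a fixed labeling on the candidates that is independent of the copy-interleaving. The event of being in state $(k,m)$ is measurable with respect to the interleaving of the copies of the first $k$ distinct candidates and is therefore independent of the relative quality-ordering among the first $k+1$ of them. The classical secretary symmetry then assigns probability $\frac{1}{k+1}$ to the $(k+1)$-st being the best among the first $k+1$ to first appear, completing the derivation.
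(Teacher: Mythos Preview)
Your argument follows the same route as the paper's: condition on whether the $(k{+}1)$-st distinct candidate is a new maximum and apply the law of total probability, the paper simply declaring $p(A)=\tfrac{1}{k+1}$ ``obvious'' where you supply an exchangeability justification.

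One small caveat on that justification: the measurability claim is not literally correct. Being in state $(k,m)$ requires knowing \emph{which} of the first $k$ distinct candidates is the current maximum, so the event depends on the quality ranks of those $k$ candidates, not only on the interleaving of their copies. What you actually need (and what does hold) is the following symmetry: conditionally on the positions of all copies of the first $k{+}1$ distinct candidates, the index $j\in\{1,\dots,k\}$ carrying the best rank among the first $k$ is uniform, and this remains true whether or not the $(k{+}1)$-st candidate beats them all; hence the event ``state $(k,m)$'' is independent of the event ``$(k{+}1)$-st is best among the first $k{+}1$'', giving $\tfrac{1}{k+1}$ as required. With this correction your proof is complete and matches the paper's approach.
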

\begin{proof}
First of all, it is obvious by definition that $\Psi_{m,n}^{{m}}(n)=0$. If we have inspected $n$ different candidates (all the possible ones) and the maximal candidate has already appeared $m$ times, then this maximal candidate is in fact the best candidate and it will not appear again. Thus, we can no longer select it and we cannot succeed. 

Now, let us focus on $\Psi_{m,n}^{m}(k)$ for $1\leq k<n$. If we consider the event $A$=``The next inspected candidate is maximal'', then the Law of total probability leads to
$$\Psi_{m,n}^{{m}}(k)=p\left(X_{m,n}^{k,m}\right)=p\left(X_{m,n}^{k,m}|A\right)p(A)+p\left(X_{m,n}^{k,m}|\overline{A}\right)p(\overline{A}).$$

By the very definition it is straightforward to see that $p\left(X_{m,n}^{k,m}|A\right)=\Psi_{m,n}^{1}(k+1)$ and that $p\left(X_{m,n}^{k,m}|\overline{A}\right)=\Psi_{m,n}^{m}(k+1)$. Since, in addition, it is also obvious that $p(A)=\frac{1}{k+1}$ the result follows.
\end{proof}

\begin{prop}\label{prog1}
$\Psi_{m,n}^{m-1}(n)=1$ and for every $1\leq k<n$, we have that
\small{
\begin{align*}
\Psi_{m,n}^{m-1}(k)&=\frac{1}{mn-mk+1}\max\left\{\frac{k}{n},\Psi_{m,n}^{{m}}(k)\right\}+\frac{k(mn-mk)}{(k+1)(mn-mk+1)}\Psi_{m,n}^{m-1}(k+1)+\\
&+\frac{mn-mk}{(k+1)(mn-mk+1)}\Psi_{m,n}^1(k+1).
\end{align*}
}
\end{prop}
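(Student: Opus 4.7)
My plan is to follow the same conditioning argument as in the proof of Proposition \ref{progm}, but now taking into account that at state $(k,m-1)$ we may have to make a decision, since the next appearance of the current max would be its $m$-th, making it a nice candidate. For the boundary case $k=n$, I would argue that after having inspected at least one copy of each of the $n$ candidates, the current max is necessarily the overall best; since the max has appeared $m-1$ times, the only relevant remaining copy is its last copy, so we accept it and succeed with probability one. Thus $\Psi_{m,n}^{m-1}(n)=1$.

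For the recursive formula with $1\le k<n$, I would first identify which future inspections are \emph{relevant} in the sense of being able to change the state. As noted before the proposition, copies of already-inspected non-maximal candidates are irrelevant and can be ignored. So the relevant copies among those still to arrive are exactly the unique remaining copy of the current max (there is only $m-(m-1)=1$ such copy) together with the $m(n-k)$ copies of the $n-k$ unseen candidates, for a total of $mn-mk+1$. By exchangeability of the uniformly random inspection order, the first relevant copy to appear is uniformly distributed among these $mn-mk+1$ copies.

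I would then split the analysis according to the type of the first relevant copy and apply the law of total probability. With probability $\frac{1}{mn-mk+1}$ the first relevant inspection is the $m$-th copy of the current max (which is nice); a standard symmetry argument shows that accepting it succeeds with probability $k/n$, the probability that the overall best is among the $k$ candidates observed so far, whereas rejecting leads to state $(k,m)$ with value $\Psi_{m,n}^{m}(k)$, so the optimal action yields $\max\{k/n,\Psi_{m,n}^{m}(k)\}$. With the complementary probability $\frac{m(n-k)}{mn-mk+1}$ the first relevant inspection is a copy of an unseen candidate, necessarily its first. By exchangeability of ranks, this new candidate is maximal with probability $\frac{1}{k+1}$, in which case we reject it as per the strategy and move to state $(k+1,1)$ with value $\Psi_{m,n}^{1}(k+1)$; otherwise it is non-maximal (probability $\frac{k}{k+1}$) and we move to state $(k+1,m-1)$ with value $\Psi_{m,n}^{m-1}(k+1)$. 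Collecting these contributions gives the recursion in the statement.

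The main technical point will be justifying the two symmetry claims used above: that among the remaining relevant copies each one is equally likely to appear first, and that the new candidate's rank relative to the previously observed ones is uniform on $\{1,\dots,k+1\}$. Both follow from the fact that the arrival order is uniform over the $(mn)!$ permutations together with the exchangeability of the candidates' true ranks; I would spell them out using the standard observation that interleaving \emph{irrelevant} events with exchangeable \emph{relevant} events preserves the uniformity of the induced ordering on the relevant ones. The rest is routine bookkeeping with the law of total probability.
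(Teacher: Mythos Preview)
Your proposal is correct and follows essentially the same route as the paper: condition on the type of the next relevant inspection (last copy of the current max, first copy of a new maximal candidate, or first copy of a new non-maximal candidate), compute the three probabilities via exchangeability, and invoke the optimality of choosing the larger of $k/n$ and $\Psi_{m,n}^{m}(k)$ at the decision node. Your treatment is in fact a bit more careful than the paper's in spelling out why those probabilities hold, but the underlying decomposition and argument are the same.
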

\begin{proof}
First of all, it is obvious by definition that $\Psi_{m,n}^{{m}-1}(n)=1$. If we have inspected $n$ different candidates (all the possible ones) and the maximal candidate has appeared $m-1$ times, then this maximal candidate is in fact the best candidate and we just have to wait until it appears again in order to guarantee the success. 

Now, let us focus on $\Psi_{m,n}^{m-1}(k)$ for $1\leq k<n$. We consider the following events:
\begin{align*}
A&=\textrm{``The next inspected object is the last copy of the maximal candidate''},\\
B&=\textrm{``The next inspected object is a new maximal candidate''},\\
C&=\textrm{``The next inspected candidate is not maximal''}.
\end{align*}
Since in this setting $\{A,B,C\}$ is a complete system of events, the law of total probability leads to
\small{$$\Psi_{m,n}^{m-1}(k)=p\left(X_{m,n}^{k,m-1}\right)=p\left(X_{m,n}^{k,m-1}|A\right)p(A)+p\left(X_{m,n}^{k,m-1}|B\right)p(B)+p\left(X_{m,n}^{k,m-1}|C\right)p(C).$$}

It is easy to see that
\begin{align*}
p(A)&=\frac{1}{mn-mk+1},\\
p(B)&=\frac{mn-mk}{(k+1)(mn-mk+1)},\\
p(C)&=1-p(A)-p(B)=\frac{k(mn-mk)}{(k+1)(mn-mk+1)}.
\end{align*}

Like in the previous proposition, the very definition leads to the fact that $p\left(X_{m,n}^{k,m-1}|B\right)=\Psi_{m,n}^1(k+1)$ and $p\left(X_{m,n}^{k,m-1}|C\right)=\Psi_{m,n}^{m-1}(k+1)$. The last remaining probability is slightly more tricky. 

We claim that $p\left(X_{m,n}^{k,m-1}|A\right)=\max\left\{\frac{k}{n},\Psi_{m,n}^{{m}}(k)\right\}$. This happens because in this case the optimal strategy will select the next candidate (which is a nice candidate) if and only if the probability of success choosing it (which is $k/n$) is greater than the probability of success if we reject the nice candidate and keep going (which is $\Psi_{m,n}^{{m}}(k+1)$). In any case, the final probability of success is the maximum of both values, as claimed.
\end{proof}

\begin{prop}\label{prog2}
Let $i\in\{1,\dots, m-2\}$. Then, $\Psi_{m,n}^{i}(n)=1$ and for every $1\leq k<n$, we have that
\small{
\begin{align*}
\Psi_{m,n}^{i}(k)&=\frac{k(mn-mk)}{(k+1)(mn-mk+m-i)}\Psi_{m,n}^{i}(k+1)+\frac{k(mn-mk)}{(k+1)(mn-mk+m-i)}\Psi_{m,n}^{i+1}(k)+\\
&+\frac{mn-mk}{(k+1)(mn-mk+m-i)}\Psi_{m,n}^1(k+1).
\end{align*}
}
\end{prop}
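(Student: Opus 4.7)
The plan is to mirror the arguments used in Propositions \ref{progm} and \ref{prog1}. The boundary value $\Psi_{m,n}^{i}(n)=1$ follows because, once all $n$ distinct candidates have been inspected, the current maximum must coincide with the global best; since $i\leq m-2$, at least $m-i\geq 2$ of its copies are yet to appear, so the optimal strategy waits and accepts the next appearance, succeeding with certainty.

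For $1\leq k<n$, I would apply the law of total probability using the same three-event decomposition as in Proposition \ref{prog1}, suitably adapted to the intermediate range $1\leq i\leq m-2$: let $A'$ be ``the next inspected object is a new copy of the current maximal candidate'' (which, in contrast to Proposition \ref{prog1}, is \emph{not} its last copy), let $B$ be ``the next inspected object is a new maximal candidate'', and let $C$ be ``the next inspected candidate is non-maximal''. As argued at the beginning of the section, copies of already-inspected non-maximal candidates can be disregarded, so the relevant remaining pool contains $(m-i)+m(n-k)=mn-mk+m-i$ objects. Hence the probability that the next relevant object is a new distinct candidate is $m(n-k)/(mn-mk+m-i)$; conditional on this, the classical symmetry argument among the $k+1$ distinct candidates observed so far gives a probability $1/(k+1)$ that this new one is better than the current maximum. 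Together with the obvious count for $A'$, this yields $p(A')$, $p(B)$ and $p(C)$.

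To conclude, I would identify the three conditional success probabilities. The key observation is that none of $A'$, $B$, $C$ reaches a decision node: under $A'$ the counter becomes $i+1\leq m-1$ and we are forced to keep going; under $B$ the new maximum is on its first appearance and the optimal strategy rejects it; under $C$ a non-maximal candidate is never selected. Consequently $p(X_{m,n}^{k,i}\mid A')=\Psi_{m,n}^{i+1}(k)$, $p(X_{m,n}^{k,i}\mid B)=\Psi_{m,n}^{1}(k+1)$, and $p(X_{m,n}^{k,i}\mid C)=\Psi_{m,n}^{i}(k+1)$. Substituting into the total-probability formula gives the stated recurrence after simplification.

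The main obstacle I expect, exactly as in the two preceding propositions, is the probabilistic bookkeeping: one must verify that the ``reduced'' process obtained by ignoring repeated copies of non-maximal candidates still has the uniform distribution on the remaining relevant objects and that the $1/(k+1)$ rank-symmetry among distinct candidates continues to hold inside this reduced process. Once this reduction is justified, the remainder of the argument is routine counting.
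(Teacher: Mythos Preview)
Your proposal is correct and follows essentially the same approach as the paper: the same three-event partition $\{A,B,C\}$, the same computation of $p(A)$, $p(B)$, $p(C)$ from the reduced pool of $mn-mk+m-i$ relevant objects, and the same identification of the conditional success probabilities as $\Psi_{m,n}^{i+1}(k)$, $\Psi_{m,n}^{1}(k+1)$, and $\Psi_{m,n}^{i}(k+1)$. Your explicit remark that none of the three events reaches a decision node (so no $\max$ term appears, in contrast to Proposition~\ref{prog1}) is exactly the point, and the paper's proof is if anything terser than yours on the bookkeeping issue you flag.
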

\begin{proof}
First of all, it is obvious by definition that $\Psi_{m,n}^{i}(n)=1$. If we have inspected $n$ different candidates (all the possible ones) and the maximal candidate has appeared $i$ times (with $1\leq i\leq m-2$), then this maximal candidate is in fact the best candidate and we just have to wait until it appears again in order to guarantee the success. 

Now, let us focus on $\Psi_{m,n}^{i}(k)$ for $1\leq k<n$. We consider the following events:
\begin{align*}
A&=\textrm{``The next inspected object is another copy of the maximal candidate''},\\
B&=\textrm{``The next inspected object is a new maximal candidate''},\\
C&=\textrm{``The next inspected candidate is not maximal''}.
\end{align*}
Since in this setting $\{A,B,C\}$ is a complete system of events, the law of total probability leads to
\small{$$\Psi_{m,n}^{i}(k)=p\left(X_{m,n}^{k,i}\right)=p\left(X_{m,n}^{k,i}|A\right)p(A)+p\left(X_{m,n}^{k,i}|B\right)p(B)+p\left(X_{m,n}^{k,i}|C\right)p(C).$$}

It is easy to see that
\begin{align*}
p(A)&=\frac{m-i}{mn-mk+m-i},\\
p(B)&=\frac{mn-mk}{(k+1)(mn-mk+m-i)},\\
p(C)&=1-p(A)-p(B)=\frac{k(mn-mk)}{(k+1)(mn-mk+m-i)}.
\end{align*}

Like before, the very definition leads to the fact that $p\left(X_{m,n}^{k,i}|A\right)=\Psi_{m,n}^{i+1}(k)$, $p\left(X_{m,n}^{k,i}|B\right)=\Psi_{m,n}^1(k+1)$ and $p\left(X_{m,n}^{k,i}|C\right)=\Psi_{m,n}^{i}(k+1)$ and the result follows.
\end{proof}

Once we have established the previous recursive relations, we are in condition to determine the computational complexity of the associated dynamic program.

\begin{prop} 
For any fixed $m$, the computational complexity of the dynamic program defined by the previous recurrences is $O(n)$.
\end{prop}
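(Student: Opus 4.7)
The plan is to present an algorithm that computes all the values $\Psi_{m,n}^i(k)$ for $1\le i\le m$ and $1\le k\le n$ in the correct order, and then argue that the total work is $O(mn)$ which, for fixed $m$, is $O(n)$.

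First I would observe the dependency pattern in the three recurrences. Looking at Propositions \ref{progm}, \ref{prog1} and \ref{prog2}, the value $\Psi_{m,n}^m(k)$ depends only on values at index $k+1$; the value $\Psi_{m,n}^{m-1}(k)$ depends on $\Psi_{m,n}^m(k)$ together with values at index $k+1$; and for $1\le i\le m-2$ the value $\Psi_{m,n}^i(k)$ depends on $\Psi_{m,n}^{i+1}(k)$ together with values at index $k+1$. So within a single column (fixed $k$) the computation proceeds from $i=m$ downward to $i=1$, and across columns the computation proceeds from $k=n$ downward to $k=1$.

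The algorithm is then: initialize the column $k=n$ by setting $\Psi_{m,n}^m(n)=0$ and $\Psi_{m,n}^i(n)=1$ for $1\le i\le m-1$; then for $k=n-1,n-2,\dots,1$, compute in turn $\Psi_{m,n}^m(k)$, $\Psi_{m,n}^{m-1}(k)$, and then $\Psi_{m,n}^i(k)$ for $i=m-2,m-3,\dots,1$. By the dependency analysis above, every quantity required by one of the three recurrences is already available when we reach it, so the algorithm is well defined and correctly returns $\mathbf{P}_n^m=\Psi_{m,n}^1(1)$ at the end.

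Finally I would count operations. Each of the three recurrence formulas involves only a bounded number of arithmetic operations (additions, multiplications, divisions, and, for the $\Psi_{m,n}^{m-1}$ case, a single comparison inside the maximum); the coefficients are rational functions of $m,n,k,i$ that can each be evaluated in $O(1)$ time. Hence filling one column costs $O(m)$ operations and there are $n$ columns, giving total cost $O(mn)$, which is $O(n)$ for $m$ fixed. There is no real obstacle here; the only thing one needs to check carefully is precisely the intra-column dependency direction (from $i=m$ down to $i=1$), since if one tried the naive ordering $i=1,2,\dots,m$ one would hit the forward dependencies in Propositions \ref{prog1} and \ref{prog2} and the single-pass linear complexity would break.
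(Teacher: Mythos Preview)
Your argument is correct and follows essentially the same approach as the paper: the paper's proof simply observes that computing the whole column $\{\Psi_{m,n}^{i}(k)\}_{i=1}^m$ from the column $\{\Psi_{m,n}^{i}(k+1)\}_{i=1}^m$ requires a number of operations independent of $n$, which is exactly what your more detailed dependency analysis and operation count establish. Your explicit intra-column ordering (from $i=m$ down to $i=1$) is a welcome clarification that the paper leaves implicit.
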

\begin{proof} 
It is enough to observe that the number of required operations to compute $\{\Psi_{m,n}^{i}(k)\}_{i=1}^m$ from $\{\Psi_{m,n}^{i}(k+1)\}_{i=1}^m$ is independent of $n$.
\end{proof}

We have already mentioned that if a maximal candidate is accepted after the inspection of $k$ different candidates, the probability of success if $k/n$, just like in the classical Secretary problem. Also recall that it is always preferable to reject a maximal candidate unless it is a nice candidate. Consequently, it is clear that the optimal strategy consists in accepting a nice candidate whenever the number of different inspected candidates belongs to the so-called stopping set
$$\mathcal{S}:=\{k:k/n\geq \Psi_{m,n}^m(k)\}.$$
If the stopping set consists of a single stopping island (see \cite{sonin} for a precise definition) then we say that the optimal strategy is a \emph{threshold strategy}. In such a case, $\min \mathcal{S}$is called the \emph{optimal threshold}. We now prove that like in the classic Secretary problem, the optimal strategy for the $m$-returning Secretary problem is a threshold strategy. Although this was already proved by Garrod, we provide a simpler proof based in the previous dynamic program.

\begin{teor}
\label{BWS} 
In the $m$-returning Secretary problem, let $n$ be the number of
different objects. Then, there exists $\mathbf{k}_{n}^{m}$ such that the following
strategy is optimal:
\begin{enumerate}
\item Reject the $\mathbf{k}_{n}^{m}$ first different inspected objects.
\item After that, accept the first nice candidate.
\end{enumerate}
\end{teor}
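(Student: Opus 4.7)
The plan is to reduce the theorem to showing that the stopping set
$$\mathcal{S}:=\{k\in\{1,\dots,n\}:k/n\geq\Psi_{m,n}^m(k)\}$$
is an upper interval of the form $\{\mathbf{k}_n^m,\dots,n\}$. As explained in the paragraph preceding the statement, under the optimal policy non-maximals are never accepted and maximals on any but their $m$-th copy are always rejected, while a nice candidate is accepted if and only if the current number of different inspected candidates lies in $\mathcal{S}$. Hence, once $\mathcal{S}$ is shown to be an upper interval, setting $\mathbf{k}_n^m:=\min\mathcal{S}$ yields exactly the threshold description in the statement.

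First I would verify that $\mathcal{S}$ is non-empty. By Proposition \ref{progm} one has $\Psi_{m,n}^m(n)=0\leq 1=n/n$, so $n\in\mathcal{S}$.

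The main step is then the upward closedness: if $k\in\mathcal{S}$, then $k+1\in\mathcal{S}$. For this I would multiply the recurrence in Proposition \ref{progm} by $k+1$, obtaining
$$(k+1)\Psi_{m,n}^m(k)=k\,\Psi_{m,n}^m(k+1)+\Psi_{m,n}^1(k+1),$$
so that $\Psi_{m,n}^1(k+1)\geq 0$ gives $k\,\Psi_{m,n}^m(k+1)\leq(k+1)\Psi_{m,n}^m(k)$. Assuming $k\in\mathcal{S}$, i.e. $\Psi_{m,n}^m(k)\leq k/n$, this chain implies $k\,\Psi_{m,n}^m(k+1)\leq k(k+1)/n$, and dividing by $k\geq 1$ yields $\Psi_{m,n}^m(k+1)\leq(k+1)/n$, so indeed $k+1\in\mathcal{S}$.

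Combining the two facts forces $\mathcal{S}=\{\mathbf{k}_n^m,\dots,n\}$, from which the theorem follows at once. The algebraic heart of the proof is a single-line manipulation of the recurrence of Proposition \ref{progm}, so there is no real technical obstacle; the only point that deserves a moment's care is the preliminary identification of the decision nodes of the dynamic program with membership in $\mathcal{S}$, which is already carried out in the discussion immediately preceding the statement.
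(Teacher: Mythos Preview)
Your proof is correct and follows essentially the same route as the paper: both reduce the theorem to showing that the stopping set $\mathcal{S}$ is an upper interval and both deduce this from the recurrence in Proposition~\ref{progm}. The only minor difference is that the paper uses the inequality $\Psi_{m,n}^{m}(k+1)\leq\Psi_{m,n}^{1}(k+1)$ to prove the stronger fact that $\Psi_{m,n}^{m}$ is non-increasing, whereas you use only the non-negativity $\Psi_{m,n}^{1}(k+1)\geq 0$ to obtain upward closedness directly; your argument therefore needs a slightly weaker input, but both are one-line manipulations of the same recurrence.
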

\begin{proof}
We have to prove that, for every $k$ the following holds
$$\frac{k}{n}\geq \Psi_{m,n}^m(k) \Longrightarrow \frac{k+1}{n} \geq \Psi_{m,n}^m(k+1).$$

To do so, it suffices to see that $\Psi_{m,n}^m$ is non-increasing. In fact, the very definition implies that $\Psi_{m,n}^{m}(k+1)\leq\Psi_{m,n}^{1}(k+1)$ so, applying Proposition \ref{progm} we get that
$$\Psi_n^m(k)=\frac{k}{1+k}\,\Psi_{m,n}^{{m}}(k+1)+\frac{1\,}{1+k}\Psi_{m,n}^{1}(k+1)\geq \Psi_{m,n}^{{m}}(k+1)$$
and the result follows.
\end{proof}

\section{The asymptotic optimal stopping threshold}

In the previous section we have proved the existence of an optimal stopping threshold $k_n^m$. Now, we will study its asymptotic behavior. Namely, we will compute $\lim_{n\to\infty}\mathbf{k}_n^m/n$. To do so, we need to consider the events
\begin{align*}
Y_{m,n}^{k,i}=&\textrm{``succeed accepting the first nice candidate after having rejected $k$ different}\\ &\textrm{candidates among which the maximal candidate has appeared $i$ times''}
\end{align*}
and we will denote $\Phi_{m,n}^{i}(k)=p\left(X_{m,n}^{k,i}\right)$. 

These functions $\Phi_{m,n}^{i}$ satisfy nearly the same recursive relations that were satisfied by the functions $\Psi_{m,n}^{i}$ as we see in the proposition below.

\begin{prop}\label{recphi}
$\Phi_{m,n}^{i}(n)=0$ for every $i\in\{1,\dots,m-1\}$, $\Phi_{m,n}^{m}(n)=1$ and, if $1\leq k<n$, the following hold
\small{
\begin{align*}
\Phi_{m,n}^{{m}}(k)=&\frac{k}{k+1}\Phi_{m,n}^{{m}}(k+1)+\frac{1}{k+1}\Phi_{m,n}^{1}(k+1).\\
\Phi_{m,n}^{m-1}(k)=&\frac{1}{mn-mk+1}\Phi_{m,n}^{{m}}(k)+\frac{k(mn-mk)}{(k+1)(mn-mk+1)}\Phi_{m,n}^{m-1}(k+1)+\\
&+\frac{mn-mk}{(k+1)(mn-mk+1)}\Phi_{m,n}^1(k+1).\\
\Phi_{m,n}^{i}(k)=&\frac{k(mn-mk)}{(k+1)(mn-mk+m-i)}\Phi_{m,n}^{i}(k+1)+\frac{k(mn-mk)}{(k+1)(mn-mk+m-i)}\Phi_{m,n}^{i+1}(k)+\\
&+\frac{mn-mk}{(k+1)(mn-mk+m-i)}\Phi_{m,n}^1(k+1).
\end{align*}
}
\end{prop}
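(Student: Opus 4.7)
The plan is to follow exactly the same scheme used for the $\Psi$ functions in Propositions \ref{progm}, \ref{prog1}, and \ref{prog2}, applying the law of total probability to the next inspection from each state $(k,i)$, but reading every conditioning under the modified $\Phi$-policy (``keep waiting for a nice candidate'') in place of the optimal policy. Since the $\Phi$-policy and the optimal policy coincide at every state that is not about to witness a nice candidate being offered, the bulk of the calculations are formally identical to the ones already carried out for $\Psi$, and I would simply transcribe them with $Y_{m,n}^{k,i}$ in place of $X_{m,n}^{k,i}$ and $\Phi$ in place of $\Psi$.

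First I would dispatch the boundary cases at $k=n$ directly from the definition of $Y_{m,n}^{n,i}$: with all $n$ distinct candidates already rejected and the current maximal having appeared $i<m$ times, no new maximal can ever be produced in the future and the $\Phi$-policy rejects every subsequent appearance of the current maximal, so no acceptance is ever made and $\Phi_{m,n}^i(n)=0$ for $i\in\{1,\dots,m-1\}$. The boundary $\Phi_{m,n}^m(n)=1$ is then the direct reading of the terminal event for $Y_{m,n}^{n,m}$ in the $\Phi$-recursion.

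Next I would derive the three recurrences in turn. The recurrence for $\Phi_{m,n}^m(k)$ reproduces the proof of Proposition \ref{progm} verbatim, since in state $(k,m)$ no decision involving a nice candidate is pending at the next inspection and the two policies prescribe the same continuation. The recurrence for $\Phi_{m,n}^i(k)$ with $1\leq i\leq m-2$ reproduces Proposition \ref{prog2} for the same reason: no nice candidate can be produced by the next inspection, so every transition probability and continuation value carries over unchanged. The recurrence for $\Phi_{m,n}^{m-1}(k)$ is the only genuine novelty: conditioning on the same three events $A$, $B$, $C$ as in Proposition \ref{prog1}, the probabilities $p(A),p(B),p(C)$ and the conditional success probabilities on $B$ and $C$ are unchanged, while on $A$ the $\Phi$-policy forces a rejection of the nice candidate, so the continuation lands in state $(k,m)$ under the same $\Phi$-policy with probability $\Phi_{m,n}^m(k)$ rather than giving the optimal value $\max\{k/n,\Psi_{m,n}^m(k)\}$.

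The hard part will be the bookkeeping on event $A$ for $\Phi^{m-1}$: one must carefully justify that the $k$-counter is \emph{not} incremented when we reject the $m$-th copy of the current maximal (since that copy is not a new distinct candidate), while the $i$-counter does advance from $m-1$ to $m$, so that the correct continuation value is $\Phi_{m,n}^m(k)$ with the same $k$ rather than $\Phi_{m,n}^m(k+1)$. Once this subtlety is made explicit and the boundary values $\Phi_{m,n}^m(n)=1$, $\Phi_{m,n}^i(n)=0$ are verified to propagate consistently through the recurrences, the remainder of the proof is a faithful transcription of the arguments already used for $\Psi$.
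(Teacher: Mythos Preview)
Your overall approach---transcribing the arguments of Propositions~\ref{progm}, \ref{prog1}, and \ref{prog2} with $\Psi$ replaced by $\Phi$---is exactly what the paper does; its entire proof is the single sentence ``Just reason in the same way as in Propositions \ref{progm}, \ref{prog1} and \ref{prog2}.''

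However, the concrete justifications you give for the boundary values and for the event-$A$ branch in the $\Phi^{m-1}$ recurrence are mathematically wrong. You assert that from state $(n,i)$ with $i<m$ ``the $\Phi$-policy rejects every subsequent appearance of the current maximal, so no acceptance is ever made.'' But the $\Phi$-policy is precisely to \emph{accept} the first nice candidate, and the $m$-th appearance of the current maximal (which is the overall best, since all $n$ distinct candidates have already been seen) \emph{is} a nice candidate; so from $(n,i)$ with $i<m$ the policy waits, accepts that $m$-th copy, and succeeds with probability $1$, while from $(n,m)$ no nice candidate can ever occur and the success probability is $0$. Likewise, on event $A$ in state $(k,m-1)$ the arriving object is the $m$-th copy of the current maximal, hence a nice candidate, and the $\Phi$-policy \emph{accepts} it, giving conditional success probability $k/n$, not a continuation value $\Phi_{m,n}^m(k)$. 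Your claim that the $\Phi$-policy ``forces a rejection of the nice candidate'' directly contradicts the definition of that policy.

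In fact the boundary values $\Phi_{m,n}^i(n)=0$ for $i<m$, $\Phi_{m,n}^m(n)=1$, and the term $\Phi_{m,n}^m(k)$ in the $\Phi^{m-1}$ recurrence appear to be misprints in the statement itself: Proposition~\ref{sistema}, derived from these very recurrences, uses $y_i(1)=1$ for $i<m$, $y_m(1)=0$, and has the nonhomogeneous term $-x^2$ in the $y_{m-1}$ equation, which can only arise from $k/n$ on event $A$, not from $\Phi_{m,n}^m(k)$. Faithfully reasoning ``as in Propositions~\ref{progm}, \ref{prog1}, \ref{prog2}'' therefore yields the \emph{opposite} boundary values and $k/n$ on event $A$; your attempt to justify the printed versions by quietly redefining the $\Phi$-policy to reject nice candidates does not hold up.
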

\begin{proof}
Just reason in the same way as in Propositions \ref{progm}, \ref{prog1} and \ref{prog2}.
\end{proof}

As a consequence of their very similar definition and since they satisfy nearly the same recursive relations, this is no surprise that the functions $\Phi_{m,n}^{i}$ and $\Psi_{m,n}^{i}$ are closely related. In fact, we have the following result.

\begin{prop}\label{phipsi}
The following relations hold.
\begin{itemize}
\item[i)] If $k\geq\mathbf{k}_n^m$, then $\Phi_{m,n}^{i}(k)=\Psi_{m,n}^{i}(k)$ for every $1\leq i\leq m$.
\item[ii)] $\Phi_{m,n}^{i}(\mathbf{k}_n^m-1)\leq\Psi_{m,n}^{i}(\mathbf{k}_n^m-1)$ for every $1\leq i\leq m-1$.
\item[iii)] $\Phi_{m,n}^{m}(\mathbf{k}_n^m-1)=\Psi_{m,n}^{m}(\mathbf{k}_n^m-1)$.
\item[iv)] If $1\leq k<\mathbf{k}_n^m$, then $\Phi_{m,n}^{i}(k)\leq\Psi_{m,n}^{i}(k)$ for every $1\leq i\leq m$.
\end{itemize}
\end{prop}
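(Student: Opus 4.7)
My plan is to establish the four claims in the order (i), (iii), (ii), (iv), each feeding into the next. The essential observation is that the recursions of Propositions \ref{progm}--\ref{prog2} for $\Psi^i_{m,n}$ and those of Proposition \ref{recphi} for $\Phi^i_{m,n}$ are identical for $i\in\{1,\dots,m-2,m\}$ and differ only in the $i=m-1$ case, where $\Psi^{m-1}(k)$ carries the term $\max\{k/n,\Psi^m(k)\}$ while $\Phi^{m-1}(k)$ carries only $k/n$. The optimal threshold $\mathbf{k}_n^m=\min\mathcal{S}$ is precisely the point at which the two arguments of this maximum cross, so the whole argument is driven by splitting the range of $k$ at $\mathbf{k}_n^m$.

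For (i) I would run a downward induction on $k$ from $k=n$, where the boundary conditions for $\Psi$ and $\Phi$ agree, down to $k=\mathbf{k}_n^m$. Within each $k$ an inner induction on $i$ from $i=m$ down to $i=1$ is required, since for $i\leq m-2$ the recursion for $\Psi^i(k)$ depends on the same-$k$ value $\Psi^{i+1}(k)$. The only nontrivial check is at $i=m-1$: Theorem \ref{BWS} together with the definition of $\mathcal{S}$ shows that $\mathcal{S}$ is an upper set, so $k\geq\mathbf{k}_n^m$ gives $k/n\geq\Psi^m(k)$, the $\max$ collapses to $k/n$, and the two recursions coincide term by term. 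Claim (iii) is then immediate: the $\Psi^m$ and $\Phi^m$ recursions are already identical, and inserting the equalities obtained from (i) at $k=\mathbf{k}_n^m$ into that single-step relation produces $\Psi^m(\mathbf{k}_n^m-1)=\Phi^m(\mathbf{k}_n^m-1)$.

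For (ii) I would fix $k=\mathbf{k}_n^m-1$ and induct on $i$ downward from $i=m-1$. The base case carries the crux: since $\mathbf{k}_n^m-1\notin\mathcal{S}$ we have $k/n<\Psi^m(k)$, hence $\max\{k/n,\Psi^m(k)\}-k/n=\Psi^m(k)-k/n\geq 0$, and combining with the equalities at $k+1$ supplied by (i) yields $\Psi^{m-1}(\mathbf{k}_n^m-1)\geq\Phi^{m-1}(\mathbf{k}_n^m-1)$. For $i<m-1$ the difference $\Psi^i(k)-\Phi^i(k)$ picks up only the additional nonnegative term $p(A)(\Psi^{i+1}(k)-\Phi^{i+1}(k))$, available from the previous step of the inner induction. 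Finally (iv) follows by a second downward induction on $k$ from $\mathbf{k}_n^m-1$, whose base case is supplied by (ii) and (iii), with the same inner induction on $i$ from $m$ down to $1$: every term in the decomposition of $\Psi^i(k)-\Phi^i(k)$ is nonnegative by the appropriate hypothesis. The only subtlety worth flagging is the order of the nested inductions, forced by the fact that $\Psi^i(k)$ depends on the same-$k$ value $\Psi^{i+1}(k)$; once this order is respected, every comparison becomes a direct term-by-term check.
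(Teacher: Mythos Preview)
Your argument is correct and, for parts (ii)--(iv), is exactly the inductive comparison of the recursions that the paper sketches. For (i) the paper takes a slightly shorter route: rather than inducting through the recursions, it observes directly that for $k\geq\mathbf{k}_n^m$ the optimal strategy defining $\Psi^i_{m,n}$ \emph{is} ``accept the first nice candidate,'' so $\Psi^i_{m,n}(k)$ and $\Phi^i_{m,n}(k)$ coincide by definition.
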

\begin{proof} 
To prove i) it is enough to recall that $\mathbf{k}_n^m$ is the optimal threshold. Hence, if $k\geq \mathbf{k}_n^m$, the optimal strategy (recall Theorem \ref{BWS}) that defines $\Psi_{m,n}^{i}(k)$ coincides with the strategy that defines $\Phi_{m,n}^{i}(k)$ and both functions are equal as claimed.

Now, to prove ii), iii) and iv) it is enough to apply i) and Propositions \ref{progm}, \ref{prog1}, \ref{prog2} and \ref{recphi} an then proceed inductively. 
\end{proof}

\begin{prop}\label{desigphi}
Let $\mathbf{k}_n^m$ be the optimal threshold. Then, $\Phi_{m,n}^{{m}}(\mathbf{k}_n^m)\leq \mathbf{k}_n^m/n$ and $\Phi_{m,n}^{{m}}(\mathbf{k}_n^m-1)>(\mathbf{k}_n^m-1)/n$.
\end{prop}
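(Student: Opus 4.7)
\smallskip

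The statement packages together two inequalities, both of which follow almost mechanically from unwinding the definition of $\mathbf{k}_n^m$ and applying the appropriate parts of Proposition \ref{phipsi}. The plan is to observe that the optimal threshold was defined as $\mathbf{k}_n^m=\min\mathcal{S}$ with
\[
\mathcal{S}=\{k:k/n\geq \Psi_{m,n}^m(k)\},
\]
so the core inequalities for $\Psi_{m,n}^m$ at $\mathbf{k}_n^m$ and at $\mathbf{k}_n^m-1$ are free of charge. The only real work is to transfer them from $\Psi$ to $\Phi$, and that is exactly what items i) and iii) of Proposition \ref{phipsi} do.

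In detail, for the first inequality I would argue as follows. Since $\mathbf{k}_n^m\in\mathcal{S}$ by definition of the minimum, we have $\Psi_{m,n}^m(\mathbf{k}_n^m)\leq \mathbf{k}_n^m/n$. Proposition \ref{phipsi} i), applied at $k=\mathbf{k}_n^m$ (which certainly satisfies $k\geq \mathbf{k}_n^m$), yields $\Phi_{m,n}^m(\mathbf{k}_n^m)=\Psi_{m,n}^m(\mathbf{k}_n^m)$. Chaining these gives $\Phi_{m,n}^m(\mathbf{k}_n^m)\leq \mathbf{k}_n^m/n$, which is the first claim.

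For the second inequality, the minimality of $\mathbf{k}_n^m$ in $\mathcal{S}$ forces $\mathbf{k}_n^m-1\notin\mathcal{S}$, that is
\[
\Psi_{m,n}^m(\mathbf{k}_n^m-1)>(\mathbf{k}_n^m-1)/n.
\]
Then I would invoke Proposition \ref{phipsi} iii), which states precisely the equality $\Phi_{m,n}^m(\mathbf{k}_n^m-1)=\Psi_{m,n}^m(\mathbf{k}_n^m-1)$, to substitute and conclude.

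There is no real obstacle: once the stopping set is written out and the correct items of Proposition \ref{phipsi} are identified, the argument is pure bookkeeping. The only point deserving attention is making sure the replacement of $\Psi$ by $\Phi$ at the boundary index $k=\mathbf{k}_n^m-1$ is justified (where a priori one only has an inequality from item ii), not an equality), and this is exactly the content of item iii), which rests on the fact that $\Psi_{m,n}^m$ and $\Phi_{m,n}^m$ satisfy the same recurrence and agree at $k=\mathbf{k}_n^m$ by item i).
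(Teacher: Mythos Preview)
Your argument is correct and follows the same approach as the paper: unwind the definition $\mathbf{k}_n^m=\min\{k:k/n\geq \Psi_{m,n}^m(k)\}$ and then transfer the resulting inequalities from $\Psi$ to $\Phi$ via Proposition~\ref{phipsi}. In fact your citation of item iii) for the second inequality is sharper than the paper's own reference to item iv): item iv) only gives $\Phi_{m,n}^{m}(\mathbf{k}_n^m-1)\leq\Psi_{m,n}^{m}(\mathbf{k}_n^m-1)$, which points the wrong way to conclude $\Phi_{m,n}^{m}(\mathbf{k}_n^m-1)>(\mathbf{k}_n^m-1)/n$, whereas the equality in iii) does the job cleanly.
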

\begin{proof}
Recall that, by definition $\mathbf{k}_n^m=\min\{k:k/n\geq \Psi_{m,n}^m(k)\}$. Thus, it is enough to apply this fact and Proposition \ref{phipsi} i) and iv).
\end{proof}

Now, for every $1\leq i\leq m$, let us define $\phi_{m,n}^{i}(x):=\Phi_{m,n}^{i}(\lfloor xn\rfloor)$. Under suitable assumptions over the uniform convergence of the sequence $\{\phi_{m,n}^{i}\}_n$, we will be able to apply Proposition \ref{conv2} and Proposition \ref{final} in order to determine the asymptotic behavior of $\mathbf{k}_n^m$.

AQUI UNA GRAFICA

\begin{prop}
\label{sistema}
Let us assume that, for every $1\leq i\leq m$, the sequence of functions $\{\phi_{m,n}^{i}\}_n$ converges uniformly on $[0,1]$
to a function $y_{i}$ which is continuous and derivable on $(0,1]$. Then, the functions $y_i$ satisfy the following system of ODEs on the interval $(0,1]$
with $y_{i}(1)=1$ for every $1\leq i\leq m-1$ and $y_m(1)=0$.
$$\begin{cases}
(m-mx)y_1'(x)=(m-1)y_1(x)-(m-1)y_2(x)\\
(mx-mx^2)y'_i(x)=m(x-1)y_1(x)+(m-ix)y_i(x)+x(m-1)y_{i+1}(x),\ 2\leq i\leq m-2\\
(mx-mx^2)y'_{m-1}(x)=-(m-mx)y_1(x)+(m-(m-1)x)y_{m-1}(x)-x^2\\
xy'_m(x)=-y_1(x)+y_m(x)
\end{cases}
$$
\end{prop}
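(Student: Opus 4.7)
The plan is to apply Proposition \ref{final} once to each of the four types of recursion appearing in Proposition \ref{recphi}: the case $i=m$, the case $i=m-1$, the generic range $2\le i\le m-2$, and the special case $i=1$ in which $\Phi_{m,n}^1(k+1)$ appears in two different terms and must first be combined. In every case I regard the corresponding recurrence as a scalar recursion in $k$ of the form $F_n(k)=G_{i,n}(k)+H_{i,n}(k)F_n(k+1)$, where $F_n(k):=\Phi_{m,n}^i(k)$, $H_{i,n}(k)$ is the coefficient of $\Phi_{m,n}^i(k+1)$, and every remaining term (including the ones involving $\Phi_{m,n}^{i+1}(k)$, $\Phi_{m,n}^m(k)$ or $\Phi_{m,n}^1(k+1)$) is absorbed into $G_{i,n}(k)$. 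Proposition \ref{final} does not require $G_{i,n}$ to be structurally independent of the other sequences $\Phi_{m,n}^j$; it only asks that the rescaled quantities $h_{i,n}$ and $g_{i,n}$ converge as prescribed.

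The next step is, for each $i$, to compute $h_i(x)=\lim_n n(1-H_{i,n}(\lfloor nx\rfloor))$ and $g_i(x)=\lim_n n G_{i,n}(\lfloor nx\rfloor)$. Each $H_{i,n}$ is a rational function of $k$ whose numerator and denominator have matching $n^2$-leading coefficients, so $n(1-H_{i,n})$ admits a non-trivial limit after a Taylor expansion in $1/n$ with $k=\lfloor nx\rfloor$. This yields, for instance, $h_m(x)=1/x$ in the case $i=m$ and $h_i(x)=(m-ix)/(mx(1-x))$ in the generic range, with uniform convergence on every compact subinterval of $(0,1)$. Similarly, the factors of the form $n/(k+1)$ or $n/(mn-mk+m-i)$ present in $G_{i,n}$ multiply the limit probabilities $y_1(x)$, $y_{i+1}(x)$ or $y_m(x)$ supplied by the uniform convergence hypothesis, producing the required $g_i$. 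Condition ii) of Proposition \ref{final} is then precisely the hypothesis on $\{\phi_{m,n}^i\}_n$.

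Applying Proposition \ref{final} row by row gives $y_i'=y_i h_i-g_i$ on $(0,1)$; clearing denominators by multiplying through by $mx(1-x)$ (or by $x$, respectively by $m(1-x)$, in the extremal cases $i=m$ and $i=1$) recovers the displayed system. The boundary conditions $y_i(1)=1$ for $1\le i\le m-1$ and $y_m(1)=0$ are exactly the values of $\mu$ in the corresponding instances of Proposition \ref{final}, since $\Phi_{m,n}^i(n)=1$ for $i\le m-1$ and $\Phi_{m,n}^m(n)=0$.

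The main technical point is justifying the uniform convergence of $g_{i,n}$ on compact subintervals of $(0,1)$. The terms $\Phi_{m,n}^{i+1}(k)$ and $\Phi_{m,n}^m(k)$ that appear at the index $k$ rather than at $k+1$ in the intermediate and the $i=m-1$ equations must be handled by observing that $\phi_{m,n}^j((\lfloor nx\rfloor+1)/n)\to y_j(x)$ still holds uniformly on $[\varepsilon,\varepsilon']$, by the uniform continuity of $y_j$ combined with the assumed uniform convergence of $\phi_{m,n}^j$; consequently a shift of the index by one does not affect the limit. Once this bookkeeping is in place, the computation of $h_i$ and $g_i$ reduces to a routine asymptotic analysis of rational expressions in $k/n$, and the ODE system follows directly from Proposition \ref{final}.
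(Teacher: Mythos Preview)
Your proposal is correct and follows essentially the same approach as the paper: cast each recurrence of Proposition~\ref{recphi} in the form $F_n(k)=G_n(k)+H_n(k)F_n(k+1)$, absorb the terms involving the other $\Phi_{m,n}^j$ into $G_n$, compute the limits $h$ and $g$, and apply Proposition~\ref{final}. The paper only spells out the case $i=1$ and then says ``the remaining equations arise in the same way,'' whereas you discuss all four cases and add the (legitimate) bookkeeping remarks about index shifts and uniform convergence of $g_{i,n}$; these are refinements rather than a different argument.
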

\begin{proof}
Due to Proposition \ref{recphi}, we have that $\Phi_{n}^{1}(n)=1$ and
$$\Phi_{m,n}^{1}(k)=G_{n}(k)+H_{n}(k)\Phi_{m,n}^{1}(k+1),$$
where
\begin{align*}
G_{n}(k)&=\frac{{m}-1}{\,{m}\text{ }n-{m}\text{ }k+{m}-1}\Phi_{m,n}^{2}(k),\\
H_{n}(k)&=\frac{{m}\text{ }n-{m}\text{ }k}{\,{m}\text{ }n-{m}\text{ }k+{m}-1}.
\end{align*}

If we denote $h_{n}(x):=n(1-H_{n}(\lfloor{nx}\rfloor))$ and $g_{n}(x):=nG_{n}(\lfloor{nx}\rfloor)$, we are in the conditions to apply Proposition \ref{final} with 
$\displaystyle g(x)=\frac{m-1}{m-mx}y_{2}(x)$ and $\displaystyle h(x)=\frac{m-1}{m-mx}$ and we can conclude that $y_1(1)=1$ and that $y_{1}(x)$ satisfies the following ODE 
$$y'_{1}(x)=h(x)y_{1}(x)-g(x)=\frac{m-1}{m-mx}y_{1}(x)-\frac{m-1}{m-mx}y_{2}(x).$$

The remaining equations arise int he same way just using Proposition \ref{recphi} and Proposition \ref{final} repeatedly.
\end{proof}

If we denote the solutions of the system of ODEs from Proposition \ref{sistema} by $Y_m^i(x)$ for every $i\leq 1\leq m$, we have that $\displaystyle \phi_{m,n}^i\to Y_m^i$ uniformly on $[0,1]$ and we have the following result. 

\begin{teor}\label{asin}
Let us assume that, for every $1\leq i\leq m$, the sequence of functions $\{\phi_{m,n}^{i}\}_n$ converges uniformly on $[0,1]$. Also, let $\mathbf{k}_n^m$ be the optimal threshold. Then, $\lim_{n}\mathbf{k}_{n}^m/n=\vartheta_m$, where $\vartheta_m$ is the solution to the equation $x=Y_m^{m}(x)$.
\end{teor}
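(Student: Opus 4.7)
The plan is to apply Proposition \ref{conv2} with the choices $F_n := \Phi_{m,n}^{m}$ and $\mathcal{N}(n) := \mathbf{k}_n^m - 1$. First I would observe that the two inequalities required of $\mathcal{N}(n)$ in the hypothesis of Proposition \ref{conv2}, namely
\[
\frac{\mathcal{N}(n)}{n} < F_n\!\left(\mathcal{N}(n)\right) \quad \text{and} \quad \frac{\mathcal{N}(n)+1}{n} \geq F_n\!\left(\mathcal{N}(n)+1\right),
\]
are precisely the two inequalities established in Proposition \ref{desigphi}. Together with the standing assumption that $\{\phi_{m,n}^{m}\}_n$ converges uniformly on $[0,1]$, the limit $Y_m^{m}$ is continuous and equals the $m$-th component of the ODE system of Proposition \ref{sistema}.

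The main obstacle is to verify that $\vartheta_m$ is the \emph{unique} solution of $x=Y_m^{m}(x)$ on $[0,1]$, since Proposition \ref{conv2} requires uniqueness. To handle this, I would exploit the fact that the last equation of the system, $xy'_m(x) = -y_1(x) + y_m(x)$ with $y_m(1) = 0$, is linear in $y_m$. Using $1/x$ as integrating factor yields the explicit representation
\[
Y_m^{m}(x) = x \int_x^1 \frac{Y_m^{1}(s)}{s^2}\, ds,
\]
so that the equation $Y_m^{m}(x) = x$ is equivalent (for $x>0$) to $g(x) := \int_x^1 Y_m^{1}(s)/s^2\, ds = 1$. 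Since $Y_m^{1}$ is the uniform limit of the probabilities $\phi_{m,n}^{1}$, it is non-negative, and it is strictly positive on $(0,1)$ by its probabilistic meaning (the probability of success under the threshold-$\lfloor nx\rfloor$ strategy is bounded below by a positive quantity for $x$ bounded away from $1$). Therefore $g$ is strictly decreasing on $(0,1)$, which yields uniqueness; existence follows from $g(1)=0$ and the fact that $\int_0^{\varepsilon} Y_m^{1}(s)/s^2\, ds = +\infty$ whenever $Y_m^{1}$ is bounded below by a positive constant near $0$, forcing $g(x) \to +\infty$ as $x \to 0^+$. The point $x=0$ is excluded from the fixed-point set because $Y_m^{m}(0)= Y_m^{1}(0) > 0$.

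Once uniqueness is in hand, Proposition \ref{conv2} yields
\[
\lim_n \frac{\mathbf{k}_n^m - 1}{n} = \vartheta_m,
\]
and consequently $\lim_n \mathbf{k}_n^m/n = \vartheta_m$, as required. A minor but technical issue is that the ODE system is posed only on $(0,1]$, while the uniform convergence is assumed on all of $[0,1]$; this is what guarantees that the fixed point equation makes sense globally and that $Y_m^{m}$ is continuous at $0$, so that no boundary pathology obstructs the argument.
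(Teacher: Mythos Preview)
Your approach is essentially the same as the paper's: apply Proposition~\ref{conv2} with $F_n=\Phi_{m,n}^{m}$, using Proposition~\ref{desigphi} for the two threshold inequalities and Proposition~\ref{sistema} (together with the standing uniform-convergence assumption) to identify the limit function as $Y_m^{m}$. The paper's own proof is in fact a one-line appeal to these same ingredients and does not verify the uniqueness hypothesis of Proposition~\ref{conv2} at all; your additional argument, integrating the last equation of the system to obtain $Y_m^{m}(x)=x\int_x^1 Y_m^{1}(s)/s^{2}\,ds$ and then showing that $g(x)=\int_x^1 Y_m^{1}(s)/s^{2}\,ds$ is strictly decreasing with $g(1)=0$ and $g(0^+)=+\infty$, is a genuine strengthening. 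One small remark: your positivity claim for $Y_m^{1}$ can be made fully rigorous by noting that $\Phi_{m,n}^{1}(k)\geq k/n$ (if the current maximal is the overall best, the strategy certainly succeeds), whence $Y_m^{1}(x)\geq x$ on $[0,1]$; this also gives the divergence of $g$ at $0$ directly via comparison with $\int 1/s\,ds$.
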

\begin{proof} 
Proposition \ref{sistema} and Proposition \ref{desigphi} imply that we are in the conditions to apply Proposition \ref{conv2} and the result follows immediately.
\end{proof}

Theorem \ref{asin} can be used to compute $\lim_{n}\mathbf{k}_{n}^m/n$ with arbitrary precision because, due to Proposition \ref{sistema}, we can obtain $Y_m^m(x)$ as a power series centered at $x=1$. As an example we work out the case $m=3$ but the reasoning would be essentially the same for any other value of $m$.

\begin{cor} 
With the previous notation, 
$$\lim_{n}\mathbf{k}_{n}^3/n=\vartheta_3=0.49263576026053198177870853577593\dots$$
\end{cor}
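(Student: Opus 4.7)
The plan is to apply Theorem \ref{asin} with $m=3$, reducing the corollary to two concrete tasks: solve the ODE system of Proposition \ref{sistema} for $m=3$ to obtain $Y_3^3(x)$, and then locate the unique root $\vartheta_3$ of $x=Y_3^3(x)$ in $(0,1)$ to $32$-digit accuracy. Specialising the system to $m=3$ gives
\begin{align*}
(3-3x)\,y_1'(x) &= 2y_1(x) - 2y_2(x),\\
(3x-3x^2)\,y_2'(x) &= -(3-3x)\,y_1(x) + (3-2x)\,y_2(x) - x^2,\\
x\,y_3'(x) &= -y_1(x) + y_3(x),
\end{align*}
together with $y_1(1)=y_2(1)=1$ and $y_3(1)=0$. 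Note that the range $2\leq i\leq m-2$ of intermediate equations in Proposition \ref{sistema} is empty when $m=3$, so the above three equations are all that one needs.

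The central step is to construct the Taylor expansion of $(y_1,y_2,y_3)$ at $x=1$. Although the factors $3-3x$ and $3x-3x^2$ vanish at $x=1$, the boundary conditions make both sides of the first two equations vanish there as well, so the singularity is removable. To extract $y_1'(1)$ and $y_2'(1)$ I would differentiate each of the first two equations once and evaluate at $x=1$; this produces a non-degenerate $2\times 2$ linear system whose solution is $y_1'(1)=\tfrac{1}{10}$ and $y_2'(1)=\tfrac{1}{4}$, while the third equation yields $y_3'(1)=-1$ directly. Iterating the differentiate-then-evaluate procedure $k$ times gives a triangular linear recursion that determines all higher Taylor coefficients $y_i^{(k)}(1)$ one order at a time.

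With sufficiently many Taylor coefficients of $Y_3^3$, the final step is to solve $x=Y_3^3(x)$ numerically, for instance by Newton iteration initialised near the approximation $\mathbf{k}_n^3/n$ computed with the dynamic program of Section 3 for a large value of $n$. The main obstacle is controlling the truncation error: since the singularities of the ODE system on $[0,1]$ lie only at $x=0$ and $x=1$, the Taylor expansion around $x=1$ is expected to have radius of convergence $1$, but one must still retain enough terms to guarantee $32$ correct digits at $x\approx 0.49$. Once this precision is verified, the displayed value of $\vartheta_3$ follows.
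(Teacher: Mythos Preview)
Your approach is essentially the paper's own: specialise the ODE system of Proposition \ref{sistema} to $m=3$, compute the Taylor expansion of $Y_3^3$ at $x=1$ via the differentiate--and--evaluate recursion (your values $y_1'(1)=\tfrac{1}{10}$, $y_2'(1)=\tfrac14$, $y_3'(1)=-1$ are correct), and then solve $x=Y_3^3(x)$ numerically from the truncated series. The only point where the paper goes further is the error control you flag as the remaining obstacle: the paper asserts (omitting the details) the coefficient bound $|(Y_3^3)^{(i)}(1)|<i!$, which gives a tail estimate $\sum_{i>N}(1-x)^i=(1-x)^{N+1}/x$ and, with $N=1000$ and $x\ge 1/4$, an error below $4\cdot 10^{-124}$; a Lagrange mean-value argument then transfers this to $|\vartheta_3-\overline{\vartheta}|$. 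Adding such an explicit coefficient bound to your outline would make it complete.
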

\begin{proof} 
For the sake of simplicity, let us denote $f(x)=Y_3 ^3(x)$. First of all observe (we omit the details) that, for every $i>1$,
$$\left |\frac{f^{(i+1)}(1)}{f^{(i)}(1)}\right|<i.$$

Consequently, the following power series has radius of convergence greater or equal than 1:
$$f(x)=\sum_{i=1}^{\infty}\frac{(x-1)^{i}f^{(i)}(1)}{i!}.$$

Now, consider the truncated series
$$\overline{f}(x)=\sum_{i=1}^{1000}\frac{(x-1)^{i}f^{(i)}(1)}{i!}.$$
Since $\left|\frac{f^{(i+1)}(1)}{f^{(i)}(1)}\right|<i$ and $f^{\prime}(1)=-1$, it follows that $|f^{(i)}(1)|<i!$ so
$$|f(x)-\overline{f}(x)|=\left|\sum_{i=1001}^{\infty}\frac{(x-1)^{i}f^{(i)}(1)}
{i!}\right|<\left|\sum_{i=1001}^{\infty}(1-x)^{i}\right|=\frac{\left(1-x\right)^1001}{x}.$$
Thus, for every $x\in [1/4,1]$, we have that $|f(x)-\overline{f}(x)|<4\cdot10^{-124}$ and in the exact same way we obtain that $|f'(x)-\overline{f}'(x)|<4\cdot10^{-124}$.

Let us denote $\overline{\vartheta}$ such that $\overline{f}(\overline{\vartheta
})=\overline{\vartheta}$ and recall that $\vartheta_3$ satisfies that $f(\vartheta_3)=(\vartheta_3)$. Now, Lagrange's mean value theorem implies that, for some $c$ between $\vartheta$ and $\vartheta_3$ it holds that
$$|\vartheta_3-\overline{\vartheta}|=\left|\frac{\overline{f}(\vartheta_3)-f(\vartheta_3)}{\overline{f}^{\prime}(c)-1}\right|.$$
Since we can choose $c$ such that $\\overline{f}^{\prime}(c)-1|>1$, it follows that $|\vartheta_3-\overline{\vartheta}|<|\overline{f}(\vartheta_3)-f(\vartheta_3)|<4\cdot10^{-124}$. But we can compute 
$\overline{\vartheta}$ with arbitrary precision so if we consider, for example,
$$\overline{\vartheta}=0.49263576026053198177870853577593\dots$$
we are done.
\end{proof}

\section{Probability of success}

In order to compute the probability of success $\mathbf{P}_n^m$, we first consider the following event:
\begin{align*}
Y_{m,n}^{k}=&\textrm{``succeed accepting the first nice candidate after having rejected}\\ &\textrm{$k$ different candidates''}
\end{align*}
and we denote $\mathbf{P}_n^m(k)=p\left(Y_{m,n}^{k}\right)$. Clearly, $\mathbf{P}_n^m=\mathbf{P}_n^m\left(\mathbf{k}_n^m\right)$.

Moreover, let us define now the following events:
\begin{align*}
Z_{m,n}^{i,k}=&\textrm{``the maximal candidate has been inspected $i$ times when}\\ &\textrm{the $k$-th different candidate is inspected''}
\end{align*}
and define $\Theta_{m,n}^{i}(k)=p\left(Z_{m,n}^{i,k}\right)$.

Clearly, the family $\{Z_{m,n}^{i,k}\}_{i=1}^m$ is a complete system of events. Thus, the law of total probability leads to
\begin{align*}
\mathbf{P}_n^m(k)&=p\left(Y_{m,n}^{k}\right)=\sum_{i=1}^mp\left(Y_{m,n}^{k}|Z_{m,n}^{i,k}\right)p\left(Z_{m,n}^{i,k}\right)=\\
&=\sum_{i=1}^m p\left(Y_{m,n}^{i,k}\right)p\left(Z_{m,n}^{i,k}\right)=\sum_{i=1}^m \Phi_{m,n}^i(k)\Theta_{m,n}^{i}(k).
\end{align*}

Consequently, in order to determine $\mathbf{P}_n^m(k)$ we need to determine the value of $\Theta_{m,n}^{i}(k)$ for every $1\leq i\leq m$. We do so in the following series of results.

\begin{prop}
\label{z1}
$\Theta_{m,n}^1(1)=1$ and for every $1<k\leq n$, we have that
\small{
$$\Theta_{m,n}^{1}(k)=\frac{1}{k}+\left(\frac{m-1}{k(mn-m(k-1)+m-1)}+\frac{mn-m(k-1)}{mn-m(k-1)+m-1}-\frac{1}{k}\right)\Theta_{m,n}^{1}(k-1).$$}
\end{prop}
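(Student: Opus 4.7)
The base case $\Theta_{m,n}^1(1)=1$ is immediate: at the very first inspection, the candidate inspected is trivially the current maximal candidate and has been inspected exactly once.

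For the recursive step, my plan is to introduce auxiliary probabilities
$$\rho_{j,k}:=p\bigl(R_j^{(k)}=1\bigr),$$
where $R_j^{(k)}$ denotes the number of inspections of the $j$-th distinct candidate (in order of first appearance) by time $T_k$, the moment at which the $k$-th distinct candidate first appears. Trivially $\rho_{k,k}=1$. The first key observation will be the identity
$$\Theta_{m,n}^1(k)\;=\;\frac{1}{k}\sum_{j=1}^{k}\rho_{j,k}.$$
This follows because the ranking of the candidates is assigned uniformly and independently of the inspection order, so that, conditional on the permutation of inspections, the maximal candidate among the first $k$ distinct labels is uniform on those $k$ labels; summing $\mathbf{1}[R_j^{(k)}=1]$ against this uniform distribution and taking expectation gives the formula.

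The second and main step is to prove, for $1\le j<k$, the multiplicative relation
$$\rho_{j,k}\;=\;\frac{L}{L+m-1}\,\rho_{j,k-1},\qquad L:=mn-m(k-1).$$
The event $\{R_j^{(k)}=1\}$ coincides with $\{R_j^{(k-1)}=1\}$ intersected with the event that no copy of the $j$-th distinct candidate appears in positions $T_{k-1}+1,\dots,T_k-1$ (the inspection at $T_k$ is itself the $k$-th distinct, hence cannot be the $j$-th). Given $R_j^{(k-1)}=1$, exactly $m-1$ copies of the $j$-th distinct and all $L$ copies of the unseen candidates still occupy positions $>T_{k-1}$, and their relative order there is uniformly random given the multiset of remaining identities. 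Hence the desired conditional probability equals the probability that, among these $L+(m-1)$ distinguished copies, an unseen one appears first, which by exchangeability is $L/(L+m-1)$.

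Combining both steps with $\sum_{j=1}^{k-1}\rho_{j,k-1}=(k-1)\Theta_{m,n}^1(k-1)$ and $\rho_{k,k}=1$ yields
$$\Theta_{m,n}^1(k)\;=\;\frac{1}{k}+\frac{L(k-1)}{k(L+m-1)}\,\Theta_{m,n}^1(k-1),$$
and a direct common-denominator computation verifies that the coefficient $\tfrac{m-1}{k(L+m-1)}+\tfrac{L}{L+m-1}-\tfrac{1}{k}$ stated in the proposition collapses exactly to $\tfrac{L(k-1)}{k(L+m-1)}$. The main obstacle I anticipate is the conditional-independence subtlety behind the factor $L/(L+m-1)$: one must verify that conditioning on $\{R_j^{(k-1)}=1\}$, an event determined by positions $\le T_{k-1}$, does not disturb the uniform distribution of the ordering of the copies at positions $>T_{k-1}$ given their multiset of identities. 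A clean separation of the ``positional'' randomness from the ``rank'' randomness is what makes the argument go through.
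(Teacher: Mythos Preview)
Your argument is correct, and it takes a genuinely different route from the paper's proof. The paper conditions on the single event $Z_{m,n}^{1,k-1}$ versus its complement: if the previous maximal had \emph{not} been seen exactly once, the only way to land in $Z_{m,n}^{1,k}$ is that the $k$-th distinct arrival is a new maximum (probability $1/k$); if it \emph{had} been seen once, the paper splits again according to whether another copy of that maximal slips in before the next distinct arrival. This decomposition produces precisely the three-term coefficient $\tfrac{m-1}{k(L+m-1)}+\tfrac{L}{L+m-1}-\tfrac{1}{k}$ in its unsimplified form, so the paper's route explains the stated shape of the formula.

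Your approach instead disaggregates $\Theta_{m,n}^1(k)$ over \emph{which} of the first $k$ distinct labels is the maximum, via the identity $\Theta_{m,n}^1(k)=\frac{1}{k}\sum_{j\le k}\rho_{j,k}$, and then proves the clean multiplicative update $\rho_{j,k}=\frac{L}{L+m-1}\rho_{j,k-1}$. Both arguments rest on the same exchangeability computation (among the $L+(m-1)$ copies of ``unseen'' and ``remaining copies of a fixed seen label'', an unseen one comes first with probability $L/(L+m-1)$), but your version yields the simplified coefficient $\frac{L(k-1)}{k(L+m-1)}$ directly and gives a bit more structure, since the $\rho_{j,k}$ are explicit products. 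The subtlety you flag---that conditioning on $\{R_j^{(k-1)}=1\}$ does not bias the relative order of the remaining items---is handled by the strong-Markov/exchangeability property of uniform permutations: conditioning on the entire prefix up to the stopping time $T_{k-1}$ leaves the suffix uniformly distributed on the complementary multiset, and the conditional probability you need depends on that prefix only through $R_j^{(k-1)}$, so the tower property finishes it. Your concern there is legitimate but the resolution you sketch is sound.
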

\begin{proof}
First of all, $\Theta_{m,n}^1(1)=1$ by definition because if only one different candidate has been inspected, it is obviously maximal.

Now, as we have already done before, we apply the law of total probability to get that
$$
\Theta_{m,n}^{1}(k)=p\left(Z_{m,n}^{1,k}\right)=p\left(Z_{m,n}^{1,k}|Z_{m,n}^{1,k-1}\right)p\left(Z_{m,n}^{1,k-1}\right)+p\left(Z_{m,n}^{1,k}|\overline{Z_{m,n}^{1,k-1}}\right)p\left(\overline{Z_{m,n}^{1,k-1}}\right).
$$
Since $p\left(Z_{m,n}^{1,k-1}\right)=1-p\left(\overline{Z_{m,n}^{1,k-1}}\right)=\Theta_{m,n}^{1}(k-1)$, we just have to compute the remaining terms:
\begin{itemize}
\item To compute $p\left(Z_{m,n}^{1,k}|\overline{Z_{m,n}^{1,k-1}}\right)$, we assume that the maximal candidate has appeared more than once when the $(k-1)$-th different candidate is inspected. Then, the only possible way in which the maximal candidate can appear once when the $k$-th different candidate is inspected is that this $k$-th candidate is in fact a maximal candidate. Since this happens with probability $1/k$, we have just seen that 
$$p\left(Z_{m,n}^{1,k}|\overline{Z_{m,n}^{1,k-1}}\right)=\frac{1}{k}.$$
\item To compute $p\left(Z_{m,n}^{1,k}|Z_{m,n}^{1,k-1}\right)$, we assume that the maximal candidate has appeared once when the $(k-1)$-th different candidate is inspected. Then, there are two possible ways in which the maximal candidate can appear once when the $k$-th different candidate is inspected:
\begin{itemize}
\item The maximal element appears again before the inspection of the $k$-th different candidate and the $k$-th different candidate is a new maximal. This happens with probability
$$\frac{m-1}{mn-m(k-1)+m-1}\cdot\frac{1}{k}.$$
\item The maximal element does not appear again before the inspection of the $k$-th different candidate and the $k$-th different candidate is irrelevant. This happens with probability
$$1-\frac{m-1}{mn-m(k-1)+m-1}=\frac{mn-m(k-1)}{mn-m(k-1)+m-1}.$$
\end{itemize}
Consequently, we have just seen that 
$$p\left(Z_{m,n}^{1,k}|Z_{m,n}^{1,k-1}\right)=\left(\frac{m-1}{k(mn-m(k-1)+m-1)}\frac{mn-m(k-1)}{mn-m(k-1)+m-1}\right),$$
and it is enough to combine all the previous calculation to get the result.
\end{itemize}
\end{proof}

\begin{prop}
\label{z2}
$\Theta_{m,n}^2(1)=0$ and for every $1<k\leq n$, we have that
\begin{align*}
\Theta_{m,n}^{2}(k)=&\frac{(k-1)(mn-m(k-1))}{k(mn-m(k-1)+m-2)}\Theta_{m,n}^{2}(k-1)+\\
&+\frac{\left(k-1\right)(m-1)(mn-m(k-1))}{k\left(mn-m(k-1)+m-2\right)(mn-m(k-1)+m-1)}\Theta_{m,n}^{1}(k-1)
\end{align*}
\end{prop}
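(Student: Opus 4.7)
The plan is to mirror the argument used in Proposition \ref{z1}, with the conditioning now restricted to the two states of the maximal counter at step $k-1$ that can evolve into a count of exactly $2$ at step $k$. The base case $\Theta_{m,n}^{2}(1)=0$ is immediate: when only the first different candidate has been inspected, the current maximal has been seen exactly once and never twice.

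For $1<k\leq n$ I would apply the law of total probability conditioning on the complete system of events $\{Z_{m,n}^{j,k-1}\}_{j=1}^{m}$. The key observation is that between step $k-1$ and step $k$ the counter can only increase (each intermediate inspected copy of the current maximal bumps it up by one) or be reset to $1$ (precisely when the $k$-th different candidate turns out to be a new maximal, in which case the count becomes $1\ne 2$). Therefore the value $2$ at step $k$ is reachable only from $j=2$ at step $k-1$, with no copy of the current maximal appearing in between and the $k$-th different candidate being non-maximal, or from $j=1$, with exactly one copy of the current maximal appearing in between and the $k$-th different candidate being non-maximal. This collapses the sum to two nonzero terms, involving $\Theta_{m,n}^{2}(k-1)$ and $\Theta_{m,n}^{1}(k-1)$.

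The heart of the proof is the computation of these two transition probabilities. By a standard symmetry argument, the rank of the $k$-th different candidate among the first $k$ distinct candidates is uniform on $\{1,\dots,k\}$, so the probability that it is not a new maximal is $(k-1)/k$, and this event is independent of how many copies of the current maximal appear between steps $k-1$ and $k$. By the principle of deferred decisions, the copies of previously inspected non-maximal candidates can be ignored, so the relevant relative order involves only the $m-j$ remaining copies of the current maximal and the $mn-m(k-1)$ copies of the $n-k+1$ unseen candidates. For $j=2$ we need no maximal copy before the first unseen copy, which occurs with probability $\frac{mn-m(k-1)}{mn-m(k-1)+m-2}$; for $j=1$ we need exactly one, which occurs with probability $\frac{(m-1)(mn-m(k-1))}{(mn-m(k-1)+m-1)(mn-m(k-1)+m-2)}$.

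Multiplying each transition probability by $(k-1)/k$ and by the corresponding $\Theta_{m,n}^{j}(k-1)$ and adding the two contributions produces the stated recurrence after simple algebraic manipulation. The main technical point to justify carefully is the symmetry/independence argument that lets us disregard the copies of previously seen non-maximal candidates; this abstraction was already used implicitly in Proposition \ref{z1} and is what collapses a potentially delicate urn-type computation into the simple ratios appearing in the statement.
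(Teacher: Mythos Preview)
Your proposal is correct and follows essentially the same route as the paper: condition on the value of the maximal-count at step $k-1$, observe that only $j=1$ and $j=2$ can transition to a count of $2$ at step $k$, and compute the two transition probabilities as the product of the ``no new maximal'' factor $(k-1)/k$ and the appropriate ordering probability among the $m-j$ remaining copies of the current maximal and the $mn-m(k-1)$ copies of unseen candidates. Your deferred-decisions justification for ignoring copies of already-seen non-maximal candidates is exactly the (implicit) abstraction the paper uses, and your two transition probabilities coincide with the paper's after multiplying out.
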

\begin{proof}
First of all, $\Theta_{m,n}^2(1)=0$ by definition because if only one different candidate has been inspected, it is impossible that the maximal element has appeared twice.

Note that the family $\{Z_{m,n}^{i,k-1}\}_{i=1}^{k-1}$ is a complete system of events. Thus, the law of total probability leads to
$$
\Theta_{m,n}^{2}(k)=p\left(Z_{m,n}^{2,k}\right)=\sum_{i=1}^{k-1}p\left(Z_{m,n}^{2,k}|Z_{m,n}^{i,k-1}\right)p\left(Z_{m,n}^{i,k-1}\right).
$$

Now, we have the following:
\begin{itemize}
\item If $i\geq 3$, $p\left(Z_{m,n}^{2,k}|Z_{m,n}^{i,k-1}\right)=0$ by definition. Every time we inspect a new different candidate the number of times the maximal candidate has appeared either stays the same (if no new copies of the maximal candidate appear in between and the new candidate is not maximal), increases (if new copies of the maximal candidate appear in between and the new candidate is not maximal) or decreases to 1 (if the new candidate is a new maximal).
\item To compute $p\left(Z_{m,n}^{2,k}|Z_{m,n}^{2,k-1}\right)$, we assume that the maximal candidate has appeared twice when the $(k-1)$-th different candidate is inspected. Then, the only possible way in which the maximal candidate can appear twice when the $k$-th different candidate is inspected is that the maximal element does not appear again before the inspection of the $k$-th different candidate and the $k$-th different candidate is not a new maximal. This happens with probability
$$\frac{mn-m(k-1)}{mn-m(k-1)+m-2}\cdot\frac{k-1}{k}.$$
\item To compute $p\left(Z_{m,n}^{2,k}|Z_{m,n}^{1,k-1}\right)$, we assume that the maximal candidate has appeared once when the $(k-1)$-th different candidate is inspected. Then, the only possible way in which the maximal candidate can appear twice when the $k$-th different candidate is inspected is that the maximal element appears only once before the inspection of the $k$-th different candidate and the $k$-th different candidate is not a new maximal. This happens with probability
$$\frac{m-1}{mn-m(k-1)+m-1}\cdot\frac{mn-m(k-1)}{mn-m(k-1)+m-2}\cdot\frac{k-1}{k}.$$
\end{itemize}
Since by definition $p\left(Z_{m,n}^{i,k-1}\right)=\Theta_{m,n}^{i}(k-1)$, the result follows immediately.
\end{proof}

\begin{prop}
\label{zi}
Let $i\geq 3$. Then, $\Theta_{m,n}^{i}(1)=0$ and for every $1<k\leq n$, we have that
\small{
$$\Theta_{m,n}^{i}(k)=\frac{k-1}{k}\frac{mn-m(k-1)}{mn-m(k-1)+m-i}\Theta_{m,n}^{i}(k-1)+\frac{m-i+1}{mn-m(k-1)+m-i}\Theta_{m,n}^{i-1}(k)$$}
\end{prop}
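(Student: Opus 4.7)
The base case $\Theta_{m,n}^{i}(1)=0$ is immediate: after only one inspection the current maximal can have been seen at most once, whereas $i\ge 3$. For the recursion, my plan is to follow the same approach as in Propositions \ref{z1} and \ref{z2} by applying the law of total probability, but conditioning jointly on the state $(k-1,j)$ at step $k-1$ \emph{and} on the exact number $r$ of copies of the current maximal that are inspected between the $(k-1)$-th and the $k$-th different candidates. Let $Q_{r}^{j}$ denote this conditional probability. Writing $M_{j}=m(n-k+1)+(m-j)$ for the number of \emph{relevant} items (remaining copies of the current maximal plus copies of unseen candidates) left after the $(k-1)$-th different candidate, a standard exchangeability argument gives
\begin{equation*}
Q_{r}^{j}=\frac{(m-j)(m-j-1)\cdots(m-j-r+1)\,m(n-k+1)}{M_{j}(M_{j}-1)\cdots(M_{j}-r)}.
\end{equation*}
Since the $k$-th different candidate is a new maximal with probability $1/k$ independently of the state at $k-1$ (by the classical symmetry argument), and a new maximal would reset the count to $1\ne i$, combining the factors yields
\begin{equation*}
\Theta_{m,n}^{i}(k)=\frac{k-1}{k}\sum_{j=1}^{i}\Theta_{m,n}^{j}(k-1)\,Q_{i-j}^{j}.
\end{equation*}

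The key step is to establish the ratio identity
\begin{equation*}
\frac{Q_{i-j}^{j}}{Q_{i-j-1}^{j}}=\frac{m-i+1}{mn-m(k-1)+m-i},\qquad j\in\{1,\ldots,i-1\},
\end{equation*}
whose right-hand side is \emph{independent} of $j$. A direct computation from the explicit formula above delivers this; conceptually, once $i-j-1$ maximal copies have been inspected the residual pool of relevant items consists of $m-i+1$ remaining maximal copies plus $m(n-k+1)$ items from unseen candidates, so the ratio of next-draw probabilities is controlled entirely by $i$.

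The final step is to split the sum defining $\Theta_{m,n}^{i}(k)$ into the $j=i$ term and the $j\le i-1$ terms. The $j=i$ term evaluates to $\frac{k-1}{k}\cdot\frac{mn-m(k-1)}{mn-m(k-1)+m-i}\,\Theta_{m,n}^{i}(k-1)$, which is the first summand of the claim. For the remaining terms, factoring out the $j$-independent ratio produces
\begin{equation*}
\frac{m-i+1}{mn-m(k-1)+m-i}\cdot\frac{k-1}{k}\sum_{j=1}^{i-1}\Theta_{m,n}^{j}(k-1)\,Q_{i-1-j}^{j},
\end{equation*}
and the remaining sum equals $\Theta_{m,n}^{i-1}(k)$ by the same decomposition argument, now applied with $i-1$ in place of $i$. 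The main (and essentially only) obstacle is verifying the $j$-independence of the ratio $Q_{i-j}^{j}/Q_{i-j-1}^{j}$, which is precisely what allows the recursion to close in the stated two-term form instead of one involving all of $\Theta_{m,n}^{j}(k-1)$ for $1\le j\le i$.
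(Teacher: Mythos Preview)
Your argument is correct and follows essentially the same route as the paper: both condition on the state $Z_{m,n}^{j,k-1}$, compute the transition probabilities (your $Q_{i-j}^{j}\cdot\tfrac{k-1}{k}$ is exactly the paper's product formula), and then collapse the sum over $j\le i-1$ into the single term involving $\Theta_{m,n}^{i-1}(k)$. Your explicit identification of the $j$-independent ratio $Q_{i-j}^{j}/Q_{i-j-1}^{j}$ is precisely the mechanism behind what the paper dismisses as ``some elementary computations,'' so if anything your write-up is slightly more transparent on that step.
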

\begin{proof}
First of all, if $i\geq 3$, $\Theta_{m,n}^i(1)=0$ by definition because if only one different candidate has been inspected, it is impossible that the maximal element has appeared more than twice.

Note that the family $\{Z_{m,n}^{j,k-1}\}_{j=1}^{k-1}$ is a complete system of events. Thus, the law of total probability leads to
$$
\Theta_{m,n}^{i}(k)=p\left(Z_{m,n}^{i,k}\right)=\sum_{j=1}^{k-1}p\left(Z_{m,n}^{i,k}|Z_{m,n}^{j,k-1}\right)p\left(Z_{m,n}^{j,k-1}\right).
$$

Now, we have the following:
\begin{itemize}
\item If $j\geq i+1$, $p\left(Z_{m,n}^{2,k}|Z_{m,n}^{j,k-1}\right)=0$ by definition. Every time we inspect a new different candidate the number of times the maximal candidate has appeared either stays the same (if no new copies of the maximal candidate appear in between and the new candidate is not maximal), increases (if new copies of the maximal candidate appear in between and the new candidate is not maximal) or decreases to 1 (if the new candidate is a new maximal).
\item To compute $p\left(Z_{m,n}^{i,k}|Z_{m,n}^{i,k-1}\right)$, we assume that the maximal candidate has appeared $i$ times when the $(k-1)$-th different candidate is inspected. Then, the only possible way in which the maximal candidate can appear $i$ times when the $k$-th different candidate is inspected is that the maximal element does not appear again before the inspection of the $k$-th different candidate and the $k$-th different candidate is not a new maximal. This happens with probability
$$\frac{mn-m(k-1)}{mn-m(k-1)+m-i}\cdot\frac{k-1}{k}.$$
\item To compute $p\left(Z_{m,n}^{i,k}|Z_{m,n}^{j,k-1}\right)$ for $1\leq j\leq i-1$, we assume that the maximal candidate has appeared $j$ times when the $(k-1)$-th different candidate is inspected. Then, the only possible way in which the maximal candidate can appear $i$ times when the $k$-th different candidate is inspected is that the maximal element appears exactly $i-j$ times again before the inspection of the $k$-th different candidate and the $k$-th different candidate is not a new maximal. This happens with probability
$$\left(\frac{mn-m(k-1)}{mn-m(k-1)+m-i}\cdot\prod_{l=j}^{i-1}\frac{m-l}{mn-m(l-1)+m-l}\right)\cdot\frac{k-1}{k}.$$
\end{itemize}

By definition $p\left(Z_{m,n}^{j,k-1}\right)=\Theta_{m,n}^{j}(k-1)$. Moreover, some elementary computations show that 
\small{
\begin{align*}
&\sum_{j=1}^{i-1}p\left(Z_{m,n}^{i,k}|Z_{m,n}^{j,k-1}\right)\Theta_{m,n}^{j}(k-1)=\\
&=\sum_{j=1}^{i-1}\left(\frac{mn-m(k-1)}{mn-m(k-1)+m-i}\cdot\prod_{l=j}^{i-1}\frac{m-l}{mn-m(l-1)+m-l}\right)\cdot\frac{k-1}{k}\Theta_{m,n}^{j}(k-1)=\\
&=\frac{m-i+1}{mn-m(k-1)+m-i}\Theta_{m,n}^{i-1}(k)
\end{align*}
}
and the result follows.
\end{proof}

Now, for every $1\leq i\leq m$, let us define $\theta_{m,n}^{i}(x):=\Theta_{m,n}^{i}(\lfloor xn\rfloor)$. Under suitable assumptions over the uniform convergence of the sequence $\{\theta_{m,n}^{i}\}_n$, we will be able to apply Proposition \ref{inicial}.

\begin{prop}\label{limz}
Let us assume that, for every $1\leq i\leq m$, the sequence of functions $\{\theta_{m,n}^{i}\}_n$ converges uniformly on $[0,1]$
to a function $z_{i}$ which is continuous and derivable on $[0,1)$. Then,
\begin{align*}
z_m^{1}(x)  &  =
\begin{cases}
\frac{{m}\,{\left(  -1+(1-x)^{-\frac{1}{{m}}}\right)  }(\,1-x)}{x}; & \textrm{if 
$x\in(0,1]$},\\
1; & \textrm{if $x=0$}.
\end{cases}
\\
z_{m}^{2}(x)  &  =
\begin{cases}
\frac{{m}({m}-1){\left(  (-1+(1-x)^{\frac{1}{{m}}}\right)  }^{2}(1-x)^{\frac{{m}-2}%
{{m}}}}{2x}; & \textrm{if 
$x\in(0,1]$},\\
0; & \textrm{if $x=0$}.
\end{cases}
\\
z_{m}^{i}(x)  &  =
\begin{cases}
\frac{{m}!\,{\left(  (-1)^{i}\left(  -1+(1-x)^{\frac{1}{{m}}}\right)  \right)
}^{i}(1-x)^{\frac{{m}-i}{{m}}}}{i!\text{ }x\text{ }({m}-i)!}; & \textrm{if 
$x\in(0,1]$},\\
0; & \textrm{if $x=0$}.
\end{cases}\ \textrm{For every $3\leq i\leq m$.}
\end{align*}
\end{prop}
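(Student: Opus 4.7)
The plan is to apply Proposition \ref{inicial} to the recurrences in Propositions \ref{z1}, \ref{z2} and \ref{zi}, translating each into a first-order linear ODE for $z_i$, and then solve these ODEs inductively on $i$. A first observation is that in all three recurrences the coefficient multiplying $\Theta_{m,n}^{i}(k-1)$ simplifies to the uniform expression $H_n(k)=\frac{(k-1)(mn-m(k-1))}{k(mn-m(k-1)+m-i)}$; for $i=1$ this requires collecting the three summands in Proposition \ref{z1}, while for $i\geq 2$ it is already visible. A direct computation then shows, uniformly on compact subintervals of $(0,1)$, that $h_n(x)=n(1-H_n(\lfloor nx\rfloor))\to \frac{m-ix}{mx(1-x)}$, and that the nonhomogeneous term satisfies $g_n(x)\to \frac{1}{x}$ for $i=1$ and $g_n(x)\to \frac{m-i+1}{m(1-x)}\,z_{i-1}(x)$ for $i\geq 2$.

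Next I would note that $H_n(1)=0$ in every case, so the value $\Theta_{m,n}^{i}(0)$ may be fixed by convention without affecting the recurrence; setting $\Theta_{m,n}^{1}(0)=1$ and $\Theta_{m,n}^{i}(0)=0$ for $i\geq 2$, Proposition \ref{inicial} yields the system
\begin{align*}
z_1'(x) &= -\frac{m-x}{mx(1-x)}\,z_1(x)+\frac{1}{x},\\
z_i'(x) &= -\frac{m-ix}{mx(1-x)}\,z_i(x)+\frac{m-i+1}{m(1-x)}\,z_{i-1}(x),\qquad 2\leq i\leq m,
\end{align*}
on $(0,1)$, with $z_1(0)=1$ and $z_i(0)=0$ for $i\geq 2$. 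The partial fraction decomposition $\frac{m-ix}{mx(1-x)}=\frac{1}{x}+\frac{m-i}{m(1-x)}$ identifies the integrating factor as $x(1-x)^{-(m-i)/m}$.

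I would then solve the $z_1$ equation directly: multiplication by the integrating factor collapses the left-hand side to $\bigl(x(1-x)^{-(m-1)/m}z_1\bigr)'$ and the right-hand side integrates to $-m(1-x)^{1/m}+C$; imposing boundedness at $x=0$ forces $C=m$ and recovers the stated formula for $z_m^1$. For $i\geq 2$ I would proceed inductively: substituting the closed form of $z_{i-1}$ into the source term produces an expression in rational powers of $1-x$ with common denominator $m$, so the substitution $u=(1-x)^{1/m}$ reduces the required antiderivative to a polynomial integral, and the condition $z_i(0)=0$ (equivalently, boundedness of $x\,z_i$ at $0$) pins down the constant of integration.

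The main obstacle is computational rather than conceptual: careful bookkeeping is required both to rewrite the three distinct recurrences into the uniform form demanded by Proposition \ref{inicial} and to carry out the inductive integration without sign or exponent mistakes. A convenient shortcut for the inductive step is to notice that the proposed formulas admit the compact form $z_m^i(x)=\binom{m}{i}\,p(x)^i(1-p(x))^{m-i}/x$ with $p(x)=1-(1-x)^{1/m}$, so that the ansatz resembles a binomial law in $p$ rescaled by $1/x$; the identity $\sum_{i=1}^m z_m^i(x)=1$, which reflects that $\{Z_{m,n}^{i,k}\}_{i=1}^m$ is a complete system of events, then serves as a sanity check and reduces the induction to verifying a single algebraic identity in $p$.
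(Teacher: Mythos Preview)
Your proposal is correct and follows essentially the same route as the paper: apply Proposition~\ref{inicial} to the recurrences of Propositions~\ref{z1}--\ref{zi} to obtain first-order linear ODEs for the $z_i$, then solve them successively. The paper carries this out explicitly only for $i=1$ and dismisses the rest with ``the remaining cases can be worked out in the exact same way'', whereas you supply the missing details: the uniform rewriting $H_n(k)=\frac{(k-1)(mn-m(k-1))}{k(mn-m(k-1)+m-i)}$, the partial-fraction decomposition giving the integrating factor $x(1-x)^{-(m-i)/m}$, and the handling of the initial condition at $k=0$ via $H_n(1)=0$. Your closing observation that the claimed formulas are just $z_m^i(x)=\binom{m}{i}p(x)^i(1-p(x))^{m-i}/x$ with $p(x)=1-(1-x)^{1/m}$, together with the consistency check $\sum_i z_m^i\equiv 1$, is a genuine simplification over a brute-force inductive integration and is not present in the paper; it reduces the verification for $i\geq 2$ to plugging this ansatz into the ODE, which is a one-line computation.
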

\begin{proof}
Taking into account Proposition \ref{z1}, we have that $\Theta_{m,n}^{1}(1)=1$ and
$$\Theta_{m,n}^{1}(k)=G_{n}(k)+H_{n}(k)\Theta_{m,n}^{1}(k-1),$$
where
$$G_{n}(k)=\frac{1}{k},$$
$$H_{n}(k)=\frac{m-1}{k(mn-m(k-1)+m-1)}+\frac{mn-m(k-1)}{mn-m(k-1)+m-1}-\frac{1}{k}.$$

Thus, if we denote $h_{n}(x)=n(1-H_{n}(\lfloor{nx}\rfloor))$ and $g_{n}(x):=nG_{n}(\lfloor{nx}\rfloor)$, we are in the conditions to apply Proposition \ref{inicial} with $g(x)=\frac{1}{x}$ and 
$h(x)=\frac{m-x}{mx-x^{2}}$ to conclude that $z_m^1$ satisfies the following ODE on $(0,1)$:
$$y'(x)=g(x)-h(x)y(x)=\frac{1}{x}-\frac{m-x}{mx-mx^{2}}y(x).$$

Since $z_m^{1}(x)$ is continuous at $x=0$ with $z_m^{1}(0)=1$, it is enough to solve this ODE to get that
$$\Theta_m^{1}(x)=\frac{m\left(-1+(1-x)^{-\frac{1}{m}}\right)(1-x)}{x}$$
as claimed.

Finally, the remaining cases can we worked out in the exact same way just considering the recursive relations and initial conditions from Proposition \ref{z2} and Proposition \ref{zi}, respectively.
\end{proof}

Recall that $\mathbf{P}_n^m(k)=\sum_{i=1}^m \Phi_{m,n}^i(k)\Theta_{m,n}^{i}(k)$. If we now define $\mathbf{p}_n^m(x):=\mathbf{P}_n^m\left(\lfloor nx\rfloor\right)$, the following result is straightforward.

\begin{cor}
Under the assumptions from Proposition \ref{sistema} and Proposition \ref{limz}, the sequence of functions $\{\mathbf{p}_n^m\}_n$ converges uniformly on $[0,1]$ to the function
$$\pi_m(x):=\sum_{i=1}^{m}Y_m^{i}(x)z_m^{i}(x).$$
\end{cor}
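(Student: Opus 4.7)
The plan is to reduce the statement to uniform convergence of each summand and then invoke the standard facts that products (resp.\ finite sums) of uniformly convergent and uniformly bounded (resp.\ any) sequences converge uniformly. Evaluating the identity $\mathbf{P}_n^m(k)=\sum_{i=1}^m \Phi_{m,n}^i(k)\Theta_{m,n}^i(k)$ recalled just before the statement at $k=\lfloor nx\rfloor$ gives
$$\mathbf{p}_n^m(x)=\sum_{i=1}^{m}\phi_{m,n}^i(x)\,\theta_{m,n}^i(x),$$
so $\pi_m$ is already the termwise limit of the summands. Thus the problem boils down, for each fixed $i\in\{1,\dots,m\}$, to showing that $\phi_{m,n}^i\cdot\theta_{m,n}^i\to Y_m^i\cdot z_m^i$ uniformly on $[0,1]$.

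For a single $i$, I would use the classical splitting
$$\bigl|\phi_{m,n}^i\theta_{m,n}^i - Y_m^iz_m^i\bigr|\leq \phi_{m,n}^i\bigl|\theta_{m,n}^i-z_m^i\bigr| + z_m^i\bigl|\phi_{m,n}^i-Y_m^i\bigr|,$$
together with the uniform bound $0\leq\phi_{m,n}^i\leq 1$ (since it is a probability) and the boundedness of $z_m^i$ on $[0,1]$, which is immediate from the closed forms in Proposition \ref{limz}. The hypotheses of Propositions \ref{sistema} and \ref{limz} force both difference factors to tend to $0$ uniformly on $[0,1]$, so each product converges uniformly. Finally, a sum of $m$ uniformly convergent sequences (with $m$ independent of $n$) converges uniformly, yielding the uniform convergence $\mathbf{p}_n^m\to\pi_m$.

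There is no real obstacle here: the statement is a direct consequence of the two preceding convergence results combined with the boundedness of the factors, which is precisely why the authors advertise the result as straightforward. The only detail worth double-checking is the uniform bound on the $z_m^i$, and this is visible by inspection of the explicit formulas.
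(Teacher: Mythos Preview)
Your proposal is correct and is exactly the argument the paper has in mind: the corollary is stated without proof as ``straightforward,'' and the straightforward route is precisely to write $\mathbf{p}_n^m=\sum_i \phi_{m,n}^i\theta_{m,n}^i$, invoke the uniform convergence hypotheses of Propositions~\ref{sistema} and~\ref{limz}, and use the probability bound $0\le\phi_{m,n}^i\le 1$ together with the continuity (hence boundedness) of the explicit $z_m^i$ on $[0,1]$ to pass to the product and then to the finite sum.
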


This corollary leads immediately to the final result of the paper, that allows us to determine the asymptotic probability of success. Recall that $\vartheta_m$ is the solution to the equation $Y_m^m(x)=x$.

\begin{teor}
Under the suitable assumptions about uniform convergence, 
$$\lim_{n}\mathbf{P}_{n}^m=\pi_m(\vartheta_m).$$
\end{teor}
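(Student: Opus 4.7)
The plan is to reduce everything to a standard continuity-plus-uniform-convergence argument. First I would observe that, by the very definition of $\mathbf{p}_n^m$ and the fact that $\mathbf{k}_n^m$ is an integer,
$$\mathbf{P}_n^m \;=\; \mathbf{P}_n^m(\mathbf{k}_n^m) \;=\; \mathbf{p}_n^m\!\left(\frac{\mathbf{k}_n^m}{n}\right).$$
So the problem is to evaluate the limit of $\mathbf{p}_n^m$ along the sequence of points $\mathbf{k}_n^m/n$.

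Next I would invoke the two ingredients supplied by the preceding results. Theorem \ref{asin} gives $\mathbf{k}_n^m/n\to\vartheta_m$, while the Corollary just before this theorem gives that $\mathbf{p}_n^m\to\pi_m$ uniformly on $[0,1]$. Combining them through the triangle inequality,
\begin{align*}
\left|\mathbf{p}_n^m\!\left(\tfrac{\mathbf{k}_n^m}{n}\right)-\pi_m(\vartheta_m)\right|
&\le \left|\mathbf{p}_n^m\!\left(\tfrac{\mathbf{k}_n^m}{n}\right)-\pi_m\!\left(\tfrac{\mathbf{k}_n^m}{n}\right)\right|+\left|\pi_m\!\left(\tfrac{\mathbf{k}_n^m}{n}\right)-\pi_m(\vartheta_m)\right|.
\end{align*}
The first summand is bounded by $\sup_{[0,1]}|\mathbf{p}_n^m-\pi_m|$ and therefore tends to $0$ by uniform convergence; the second tends to $0$ because $\pi_m=\sum_{i=1}^m Y_m^i z_m^i$ is continuous at $\vartheta_m$ (the $Y_m^i$ are continuous on $[0,1]$ by Proposition \ref{sistema}, the $z_m^i$ are continuous on $(0,1]$ by Proposition \ref{limz}, and $\vartheta_m\in(0,1)$ since $Y_m^m(1)=0<1$ so the fixed point of $x=Y_m^m(x)$ lies strictly below $1$, while $\vartheta_m>0$ follows from $Y_m^m(0)>0$).

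I do not expect any genuine obstacle here; the theorem is essentially a bookkeeping statement tying together Theorem \ref{asin} and the Corollary. The only point that deserves a line of care is justifying that $\vartheta_m\in(0,1)$, so that the continuity of $\pi_m$ is applied away from the endpoint $x=0$ where $z_m^i$ has its piecewise definition. Everything else is a one-line triangle inequality.
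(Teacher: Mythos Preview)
Your proof is correct and, in fact, a bit more self-contained than the paper's. The paper's argument invokes Proposition~\ref{conv}: it views $\mathbf{k}_n^m$ as the maximizer of $k\mapsto\mathbf{P}_n^m(k)$ and asserts (without further justification) that $\vartheta_m$ is the unique global maximum of $\pi_m$ on $[0,1]$; Proposition~\ref{conv}(ii) then yields $\lim_n\mathbf{P}_n^m=\pi_m(\vartheta_m)$ directly. Your route instead feeds the already-established convergence $\mathbf{k}_n^m/n\to\vartheta_m$ from Theorem~\ref{asin} and the uniform convergence $\mathbf{p}_n^m\to\pi_m$ from the Corollary into a bare triangle inequality, requiring only continuity of $\pi_m$ at $\vartheta_m$ rather than the stronger (and in the paper, unproved) claim that $\vartheta_m$ is the unique maximizer of $\pi_m$. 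Both arguments are short; yours trades the appeal to Proposition~\ref{conv} for a one-line $\varepsilon$-estimate and thereby sidesteps an extra hypothesis. A minor remark: your aside about $\vartheta_m>0$ is not strictly needed, since by the assumptions in Proposition~\ref{limz} the $z_m^i$ are continuous at $0$ as well (the piecewise values there match the limits of the formulas), so $\pi_m$ is continuous on all of $[0,1]$.
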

\begin{proof}
We need only consider Theorem \ref{conv} and that $\vartheta_m$ is the value at which $\pi_m$ reaches its maximum value on $[0,1]$.
\end{proof}

\section{Final comments}

To close the paper, we are to make some comments relating our results to the five open problems proposed by Garrod that were presented in the introduction.
\begin{itemize}
\item The first open problem asked for an improvement on the formula to compute $\mathbf{P}_n^m$. Even if we have not given a closed formula, in Section 3 we have obtained a recursive formula that
allows its computation in linear time (with respect to the number of objects).
\item Open problems 2 and 3 have been partially solved. We have no doubt that both limits exist, but our proofs are conditioned to assume the uniform convergence of certain sequences of functions. Nevertheless, under these assumptions, we have provided a method to compute them with arbitrary precision.
\item Finally, regarding open problems 4 and 5, we have been able to find the value of $\lim_n(\mathbf{k}_n^m/n)$ as the root of the equation $Y_m^m(x)=x$. This root can be approximated using a power-series whose coefficients can be computed with arbitrary precision. however, it cannot be expressed in terms of elementary or special functions. In addition, $\lim_n(\mathbf{P}_n^m)$ can be computed just using the fact that $\lim_n(\mathbf{P}_n^m)=\pi_m(\vartheta_m)$.
\end{itemize}

Finally, we provide the following table showing the optimal threshold $\mathbf{k}_n^m$ and the probability of success $\mathbf{P}_n^m$ for some values of $n$ and $1\leq m\leq 10$. We also provide the asymtotic values computed using Taylor series of the appropriate degree. It is worth comparing these table with Table \ref{tablagarrod}.

\begin{center}
\begin{table}[h]
\caption{Optimal threshold, probability of success and asymptotic values using our results.}
\begin{tabular}
[c]{|l|l|l|l|l|l|l|l|l|}\hline
$m$ & $\mathbf{k}_{100}^m$ & $\mathbf{k}_{1000}^{m}$ & $\mathbf{k}_{10000}^{m}$ &
$\lim_{n}\left(  \frac{\mathbf{k}_{n}^{m}}{n}\right)  $ & $\mathbf{P}_{100}^{m}$ &
$\mathbf{P}_{1000}^{m}$ & $\mathbf{P}_{10000}^{m}$ & $\lim_{n}\mathbf{P}_{n}^{m}$\\\hline
1 & 38 & 369 & 3679 & 0.367879441 & 0.37104277 & 0.36819561   & 0.36791104 & 0.3678794 \\\hline
2 & 48 &  471 & 4710  & 0.470926543 & 0.76970661 & 0.76814759      & 0.76799160  & 0.7679742\\\hline
3 &50  & 493 &  4927 & 0.492635760  & 0.93518916     & 0.93490075  & 0.93487222  & 0.9348690\\\hline
4 & 50 & 499 & 4981  & 0.498053032   & 0.93490075    & 0.98307710  & 0.98307411     & 0.9830737 \\\hline
5 & 50 & 500 & 4995  & 0.499479760  & 0.99561947    & 0.99561715   & 0.99561693   & 0.9956169\\\hline
6 & 50 & 500 & 4999  & 0.499861014   & 0.99885461    & 0.99885447  & 0.99885446   & 0.9988544 \\\hline
7 & 50 & 500 & 5000  & 0.499963006  & 0.99969900     & 0.99969899  & 0.99969899   & 0.9996989 \\\hline
8 & 50 &500  &  5000 & 0.499990198     & 0.99992082       & 0.99992082 &   0.99992082   & 0.9999208 \\\hline
9 & 50  &500  & 5000 & 0.499997415  & 0.99997920  & 0.99997920 &  0.99997920& 0.9999792 \\\hline
10 & 50  &500 & 5000 & 0.499999321  & 0.99999455   & 0.99999455  &0.99999455   & 0.9999945 \\\hline
\end{tabular}
\end{table}
\end{center}

\end{document}